%
%
%
%
\documentclass{ps}
%
\def\I{{\rm{I\!\!I}}}
\def\leftB{[\![}
\def\rightB{]\!]}

\def\F{{\mathcal{ F}}}

\def\R{{\mathbb{R}} }

\def\N{{\mathbb{N}} }
\def\E{{\mathbb{E}}  }

\def\P{{\mathbb{P}}  }

\def\I{{\mathbb{I}}}
\def\det{{\rm{det}}}
\def\Tr{{\rm{Tr}}}
\def\bint#1^#2{\displaystyle{\int_{#1}^{#2}}}
\def\bsum#1^#2{\displaystyle{\sum_{#1}^{#2}}}

\def\supp{{\rm{supp}}}
\def\xdt_#1{X_#1(\Delta t)}

\newtheorem{THM}{Theorem}
\newtheorem{PROP}{Proposition}
\newtheorem{LEMME}{Lemma}

\newtheorem{REM}{Remark}

\newcommand{\mysection}{\setcounter{equation}{0} \section}

\newcommand \A[1]{{\bf (#1)}}

\begin{document}
\title{Stability of Densities for Perturbed Diffusions and Markov Chains}\thanks{The article was prepared within the framework of a subsidy granted to the HSE by the Government of the Russian Federation for the implementation of the Global Competitiveness Program.
}
\thanks{Support by Deutsche Forschungsgemeinschaft through the Research Training Group RTG 1953 is
gratefully acknowledged by the second author.}

\author{V. Konakov}\address{Higher School of Economics, Shabolovka 31, Moscow, Russian Federation. vkonakov@hse.ru}
\author{A. Kozhina}\address{Higher School of Economics, Shabolovka 31, Moscow, Russian Federation and RTG 1953, Institute of Applied Mathematics, Heidelberg University, Germany. aakozhina@hse.ru}
\author{S. Menozzi}\address{Higher School of Economics, Shabolovka 31, Moscow, Russian Federation and  LaMME, UMR CNRS 8070, Universit\'e d'Evry Val d'Essonne, 23 Boulevard de France, 91037 Evry, France. stephane.menozzi@univ-evry.fr}
\date{\today}
\begin{abstract} 
We study the sensitivity of the densities of non degenerate diffusion processes and related Markov Chains with respect to a perturbation of the coefficients. Natural applications of these results appear in models with misspecified coefficients or for the investigation of the weak error of the Euler scheme with irregular coefficients.
\end{abstract}
\begin{resume} Nous \'etudions la sensibilit\'e des densit\'es de processus de diffusion non d\'eg\'en\'er\'es et des Cha\^{i}nes de Markov associ\'ees par rapport \`a une perturbation des coefficients. Ces r\'esultats trouvent des applications naturelles dans l'\'etude de mod\`eles avec incertitude sur les coefficients ou pour l'analyse de l'erreur faible du sch\'ema d'Euler \`a coefficients irr\'eguliers.  \end{resume}
\subjclass{Primary 60H10; Secondary 65C30}
\keywords{Diffusion Processes, Markov Chains, Parametrix, H\"older Coefficients, bounded drifts. 
}
\maketitle

\mysection{Introduction}
\subsection{Setting.}
For a fixed given deterministic final horizon $T>0$, let us consider the following multidimensional SDE:
\begin{align}
& dX_{t}=b(t,X_{t})dt+\sigma (t,X_{t})dW_{t},\text{ }t\in \lbrack 0,T],
\label{1} 
\end{align}
where $b:[0,T]\times \mathbb{R}^{d}\rightarrow \mathbb{R}^{d},\  \sigma:[0,T]\times \mathbb{R}^{d}\rightarrow \R^d\otimes \R^d$ are bounded coefficients that are measurable in time and H\"older continuous in space (this last condition will be possibly relaxed for the drift term $b$) and 
$W $ is a Brownian motion on some filtered probability space $(\Omega,\F,(\F_t)_{t\ge 0},\P) $. 
Also, $a(t,x):=\sigma\sigma^*(t,x)$ is assumed to be uniformly elliptic.
In particular those assumptions guarantee that \eqref{1} admits a unique weak solution, see e.g. Bass and Perkins \cite{bass:perk:09}, \cite{meno:10} from which the uniqueness to the martingale problem for the associated generator can be derived under the current assumptions.
  
  We now introduce, for a given parameter $\varepsilon>0 $, a perturbed version of \eqref{1} with dynamics:
\begin{align}
& dX_{t}^{(\varepsilon)}=b_{\varepsilon}(t,X_{t}^{(\varepsilon)})dt+\sigma _{\varepsilon}(t,X_{t}^{(\varepsilon)})dW_{t},\text{ }%
t\in \lbrack 0,T],  \label{2}
\end{align}
where $b_\varepsilon:[0,T]\times \mathbb{R}^{d}\rightarrow \mathbb{R}^{d},\  \sigma_\varepsilon:[0,T]\times \mathbb{R}^{d}\rightarrow \R^d\otimes \R^d$ 
satisfy at least the same assumptions as $b,\sigma$ and are in some sense meant to be \textit{close} to $b,\sigma $ when $\varepsilon$ is small.

It is known that, under the previous assumptions, the density of the processes $(X_t)_{t\ge 0}, (X_t^{(\varepsilon)})_{t\ge 0}$ exists in positive time and satisfies some Gaussian bounds, see e.g Aronson \cite{aron:59}, Sheu \cite{sheu:91} or \cite{dela:meno:10} for extensions to some degenerate cases.

The goal of this work is  to investigate how the closeness of $(b_\varepsilon,\sigma_\varepsilon) $ and $(b,\sigma) $ is reflected on the respective densities of the associated processes. Important applications can for instance be found in mathematical finance.
If the dynamics of \eqref{1} models the evolution of the (log-)price of a financial asset, it is often very useful to know how a perturbation of the volatility $\sigma $ impacts the density, and therefore the associated option prices (see also Corielli \textit{et al.} \cite{cori:fosc:pasc:10} or Benhamou \textit{et al.} \cite{benh:gobe:miri:10} for related problems).  

In the framework of parameter estimation it can be useful, having at hand estimators $(b_\varepsilon,\sigma_\varepsilon) $ of the true parameters $(b,\sigma) $ and some controls for the differences $|b-b_\varepsilon|,|\sigma-\sigma_\varepsilon| $ in a suitable sense, to quantify the difference $p_\varepsilon-p $ of the densities corresponding respectively to the dynamics with the estimated parameters and the one of the model.

Another important application includes the case of mollification by spatial convolution. This specific kind of perturbation is useful to investigate the error between the densities of  a non-degenerate diffusion of type \eqref{1} with H\"older coefficients (or with piecewise smooth bounded drift) and its Euler scheme. In this framework, some explicit convergence results can be found in \cite{kona:meno:15:2}.

More generally, this situation can appear in every applicative field for which the diffusion coefficient might be misspecified.

The previously mentioned Gaussian bounds on the density are derived through the so-called \textit{parametrix} expansion which will be the crux of our approach. Roughly speaking, it consists in approximating the process by a proxy which has a known density, here a Gaussian one, and then in investigating the difference through the Kolmogorov equations.  Various approaches to the parametrix expansion exist, see e.g. Il'in \textit{et al.}
\cite{ilin:kala:olei:62}, Friedman \cite{frie:64}
 and McKean and Singer \cite{mcke:sing:67}. The latter approach will be the one used in this work since it appears to be the most adapted to handle 
 coefficients with no \textit{a priori} smoothness in time and can also be directly extended to the discrete case for Markov chain approximations of equations \eqref{1} and \eqref{2}. Let us mention in this setting the works of Konakov \textit{et al.}, see \cite{kona:mamm:00}, 
 \cite{kona:mamm:02}. 

Our stability results will also apply to two Markov chains with respective dynamics: 
\begin{eqnarray}
Y_{t_{k+1}}&=&Y_{t_k}+b(t_k,Y_{t_k})h+\sigma(t_k,Y_{t_k})\sqrt h \xi_{k+1}, Y_{0}=x,\nonumber\\
Y_{t_{k+1}}^{(\varepsilon)}&=&Y_{t_k}^{(\varepsilon)}+b_\varepsilon(t_k,Y_{t_k}^{(\varepsilon)})h+\sigma_\varepsilon(t_k,Y_{t_k}^{(\varepsilon)})\sqrt h \xi_{k+1}, Y_{0}^{(\varepsilon)}=x,\label{DYN_M_P}
\end{eqnarray}
where $h>0$ is a given time step, for which we denote for all $k\ge 0,\ t_k:=kh $ and the $(\xi_k)_{k\ge 1} $ are centered i.i.d. random variables satisfying some integrability conditions. Again, the key tool will be the parametrix representation for the densities of the chains and the Gaussian local limit theorem.
\subsection{Assumptions and Main Results.}

Let us introduce the following assumptions. Below, the parameter $\varepsilon>0$ is fixed and the constants appearing in the assumptions \textbf{do not depend} on $\varepsilon $.
\begin{trivlist}
\item[(\textbf{A1)}] \textbf{(Boundedness of the coefficients)}. The components of the vector-valued functions $b(t,x), b_\varepsilon(t,x)$ and the
matrix-functions $\sigma(t,x),\sigma_\varepsilon(t,x)$ are bounded measurable. Specifically, there exist constants $K_1,K_2>0$  s.t.
\begin{eqnarray*}
\sup_{(t,x)\in [0,T]\times \R^d}|b(t,x)|+\sup_{(t,x)\in [0,T]\times \R^d}|b_\varepsilon(t,x)|\le K_1,\\
\sup_{(t,x)\in [0,T]\times \R^d}|\sigma(t,x)|+\sup_{(t,x)\in [0,T]\times \R^d}|\sigma_\varepsilon(t,x)|\le K_2.
\end{eqnarray*}
\item[(\textbf{A2})] \textbf{(Uniform Ellipticity)}. The matrices $a:=\sigma\sigma^*, a_\varepsilon:=\sigma_\varepsilon\sigma_\varepsilon^*$ are uniformly elliptic, i.e. there exists $\Lambda \ge 1,\ \forall (t,x,\xi)\in [0,T]\times (\R^d)^2$,
\begin{eqnarray*}
\Lambda^{-1} |\xi|^2 
\le \langle a(t,x)\xi,\xi\rangle \le \Lambda |\xi|^2,
\Lambda^{-1} |\xi|^2 
\le \langle a_\varepsilon(t,x)\xi,\xi\rangle \le \Lambda |\xi|^2.
\end{eqnarray*}
\item[(\textbf{A3})] \textbf{(H\"older continuity in space)}.  For some $ \gamma \in (0,1]$ , $\kappa<\infty $, for all $t\in [0,T]$,
\begin{eqnarray*}
\left\vert \sigma(t,x)-\sigma(t,y)\right\vert+\left\vert \sigma_{\varepsilon}(t,x)-\sigma_{\varepsilon}(t,y)\right \vert \leq 
\kappa\left\vert x-y\right\vert ^{\gamma }.
\end{eqnarray*}
Observe that the last condition also readily gives, thanks to the boundedness of $\sigma,\sigma_\varepsilon $ that $a,a_\varepsilon $ are also uniformly $\gamma $-H\"older continuous.\\
\end{trivlist}

For a given $\varepsilon>0 $, we say that assumption \textbf{(A)} holds when conditions \textbf{(A1)}-\textbf{(A3)} are in force. Let us now introduce, under \textbf{(A)}, the quantities that will bound the difference of the densities in our main results below. Set for $\varepsilon>0$:
\begin{eqnarray*}
\Delta _{\varepsilon,b,\infty}:= \sup_{(t,x)\in [0,T]\times \R^d} \left\vert
b(t,x)-b_{\varepsilon}(t,x)\right\vert ,\ \forall q\in (1,+\infty),\ \Delta _{\varepsilon,b,q}:=\sup_{t\in[0,T]}\|b(t,.)-b_\varepsilon(t,.)\|_{L^q(\R^d)} .
\end{eqnarray*}
Since $\sigma,\sigma_\varepsilon$ are both $\gamma $-H\"older continuous, see (\textbf{A3}) we also define 
\begin{equation}
\label{DEF_DELTA_EPS_SIG_GAMMA}
\Delta_{\varepsilon,\sigma,\gamma}:=\sup_{u\in [0,T]}|\sigma(u,.)-\sigma_{\varepsilon}(u,.)|_{\gamma},
\end{equation}
where for $\gamma\in (0,1] $, $|.|_\gamma $ stands for the usual H\"older norm in space on $C_b^\gamma(\R^d,\R^d\otimes \R^d) $ (space of H\"older continuous bounded functions, see e.g. Krylov \cite{kryl:96})  i.e. :
$$|f|_\gamma:=\sup_{x\in \R^d}|f(x)|+[f]_\gamma,\ [f]_\gamma:= \sup_{x\neq y,(x,y)\in (\R^d)^2}\frac{|f(x)-f(y)|}{|x-y|^\gamma}.$$

 The previous control in particular implies for all $(u,x,y)\in [0,T]\times (\R^d)^2 $:
$$|a(u,x)-a(u,y) - a_{\varepsilon}(u,x) + a_{\varepsilon}(u,y)| \le 2(K_2+\kappa)\Delta_{\varepsilon,\sigma,\gamma} |x-y|^{\gamma}.$$
We eventually set for $q\in (1,+\infty]$,
\begin{equation}
\label{DEF_D_EPS}
\Delta_{\varepsilon,\gamma,q}:=\Delta_{\varepsilon,\sigma,\gamma}+\Delta_{\varepsilon,b,q},
\end{equation}
which will be the key quantity governing the error in our results.

We will denote, from now on, by $C$ a constant depending on the parameters appearing in \textbf{(A)} and $T$. We reserve the notation $c$ for constants that only depend on \textbf{(A)} but not on $T$. The values of $C,c$ may change from line to line and \textbf{do not depend on the considered parameter $\varepsilon $}. Also, for given integers $i,j\in \N$ s.t. $i<j $, we will denote by $\leftB i,j\rightB $ the set $\{i,i+1,\cdots, j\} $.

We are now in position to state our main results.

\begin{THM}[Stability Control for diffusions]
\label{MTHM}
Fix $\varepsilon>0 $ and a final deterministic time horizon $T>0$. Under (\textbf{A}) and for $q>d$, there exist 
constants 
$C:=C(q)\ge 1$ and $c\in (0,1] $ s.t. for all $0\le s<t\le T, (x,y)\in (\R^d)^2$:
\begin{equation}
\label{CTR_D}
p_c(t-s,y-x)^{-1}|(p-p_\varepsilon)(s,t,x,y)|\le C \Delta_{\varepsilon,\gamma,q} ,
\end{equation}
where $p(s,t,x,.), p_\varepsilon(s,t,x,.) $ respectively stand for the transition densities  at time $t$ of equations \eqref{1}, \eqref{2} starting from $ x$ at time $s$.
Also, we denote for a given $c>0$ and for all $u>0,z \in  \R^d$, $p_c(u,z):=\frac{c^{d/2}}{(2\pi u)^{d/2}} \exp(-c\frac{|z|^2}{2u})$. 
\end{THM}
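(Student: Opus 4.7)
The natural strategy is to run the McKean--Singer parametrix expansion announced in the introduction in parallel for both densities and to compare the resulting series term by term. For each fixed terminal point $y\in \R^d$, I would introduce a Gaussian proxy $\tilde p^y(s,t,x,\cdot)$ associated with \eqref{1} obtained by freezing the diffusion coefficient at $(t,y)$ and dropping the drift, and analogously $\tilde p_\varepsilon^y$ for \eqref{2}, leading to the convergent representations $p = \sum_{k\ge 0} \tilde p \otimes H^{(k)}$ and $p_\varepsilon = \sum_{k\ge 0} \tilde p_\varepsilon \otimes H_\varepsilon^{(k)}$, where $\otimes$ denotes space-time convolution on $[s,t]\times \R^d$, $H = (L - \tilde L)\tilde p$ is the parametrix kernel quantifying the mismatch between the true generator $L$ and the frozen one $\tilde L$, and $H^{(k)}$ is the $k$-fold self-convolution. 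Under \textbf{(A)}, the classical bound
\[
|H(u,t,z,y)| \le C\bigl((t-u)^{-1+\gamma/2} + (t-u)^{-1/2}\bigr)\, p_c(t-u, y-z)
\]
holds (the first term from the H\"older $a$-piece, the second from the bounded drift), and a beta-function argument yields absolute convergence of the series.

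The next step is to subtract the two series term by term and telescope each iterated convolution, isolating two elementary differences: $\tilde p^y - \tilde p_\varepsilon^y$ and $H - H_\varepsilon$. For the proxy, a linearization in the covariance parameter combined with $|a(t,y) - a_\varepsilon(t,y)|\le \Delta_{\varepsilon,\sigma,\gamma}$ would give $|\tilde p^y - \tilde p_\varepsilon^y|(s,t,x,y) \le C\,\Delta_{\varepsilon,\sigma,\gamma}\,p_c(t-s, y-x)$. For the diffusion contribution to $H - H_\varepsilon$, the reinforced H\"older estimate $|a(u,z)-a(u,y)-a_\varepsilon(u,z)+a_\varepsilon(u,y)|\le 2(K_2+\kappa)\Delta_{\varepsilon,\sigma,\gamma}|z-y|^\gamma$ already noted in the excerpt directly produces the singularity $(t-u)^{-1+\gamma/2}$ weighted by $\Delta_{\varepsilon,\sigma,\gamma}$.

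The delicate point is the drift contribution $(b-b_\varepsilon)(u,z)\cdot \nabla_z \tilde p^y$, for which only the weaker integral control $\Delta_{\varepsilon,b,q}$ is available. Here I would apply H\"older's inequality in the spatial variable $z$ to bound
\[
\int_{\R^d} |(b-b_\varepsilon)(u,z)|\, |\nabla_z \tilde p^y|\, \tilde p\, dz \le \Delta_{\varepsilon,b,q}\, \bigl\|\nabla_z \tilde p^y\cdot \tilde p\bigr\|_{L^{q'}},
\]
with $q' = q/(q-1)$. Using $|\nabla_z \tilde p^y|\le C(t-u)^{-1/2} p_c(t-u, y-z)$ and the Gaussian convolution identity $p_c \star p_c \sim p_{c'}$, the $L^{q'}$ norm is bounded by $C(t-u)^{-1/2}(t-s)^{-d/(2q)} p_c(t-s, y-x)$, and the resulting time integrability at $u = s$ demands $1/2 + d/(2q) < 1$, i.e.\ precisely $q > d$, exactly the assumption of the theorem.

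The main obstacle will be to verify that the linear smallness in $\Delta_{\varepsilon,\gamma,q}$ propagates unharmed through arbitrary iterations of the parametrix: in each telescoped difference of iterated convolutions only one factor at a time is replaced by a ``small'' term while the remaining factors retain their usual time singularities, and one must carefully sum the resulting beta factors to obtain a constant depending only on \textbf{(A)}, $q$ and $T$. Once this combinatorial bookkeeping is done, the claimed Gaussian upper bound \eqref{CTR_D} follows.
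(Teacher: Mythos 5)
Your plan coincides with the paper's proof in every essential respect: comparing the two McKean–Singer parametrix series term by term, controlling the difference of the frozen Gaussian proxies by linearizing in the covariance (Lemma \ref{LEM_MT}), splitting $H-H_\varepsilon$ into a diffusion piece handled via the reinforced H\"older estimate and a drift piece handled by H\"older's inequality with exponent $q>d$ (Lemma \ref{ONE_STEP_C}), and then telescoping the iterated convolutions so that each difference of $r$-fold terms produces $r+1$ contributions, each with one ``small'' factor of order $\Delta_{\varepsilon,\gamma,q}$ and the others carrying only the usual integrable time singularities, closed by the beta/gamma-function bookkeeping (Lemma \ref{LEMME_DIFF_ITE}). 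Two minor imprecisions to fix when fleshing this out: the proxy should freeze $\sigma$ only in the spatial variable at $y$ (keeping the time dependence, i.e.\ covariance $\int_s^t a(u,y)\,du$), since $\sigma$ is merely measurable in time and a $(t,y)$-freezing would destroy the integrability of the kernel singularity; and in the H\"older step the binding constraint $\tfrac12+\tfrac{d}{2q}<1$ comes from the endpoint $u=t$ (the $u=s$ endpoint only requires $\tfrac{d}{2q}<1$), though the resulting condition $q>d$ is of course the same.
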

\begin{REM}[About the constants]\label{RQ_CONST}
We mention that the constant $C:=C(q)$ in \eqref{CTR_D} explodes when $q\downarrow d $ and is decreasing in $q$ (see Lemmas \ref{ONE_STEP_C} and \ref{LEMME_DIFF_ITE} below for details). In particular, it can be chosen uniformly as soon as $q\ge q_0>d$.   
\end{REM}
Before stating our results for Markov Chains we introduce two kinds of innovations in \eqref{DYN_M_P}. Namely:
\begin{trivlist}
\item[\A{I${}_G$}] The i.i.d. random variables $(\xi_k)_{k\ge 1} $ are Gaussian, with law ${\mathcal N}(0,I_{d}) $. In that case the dynamics in \eqref{DYN_M_P} corresponds to the Euler discretization of equations \eqref{1} and \eqref{2}.
\item[\A{I${}_{P,M}$}]  For a given integer $M>2d+5+\gamma $, the innovations $(\xi_k)_{k\ge 1} $ 
are centered 
and have $C^5 $ density $f_\xi $ which has, together with its derivatives up to order $5$, at least polynomial decay of order $M$. Namely, for all $z\in \R^d$ and multi-index $\nu, |\nu|\le 5 $:
\begin{equation}
\label{CONC_INNO}
|D^\nu f_\xi(z)|\le CQ_M(z),
\end{equation}
where we denote for all $r>d,\ z\in \R^d ,\ \ Q_r(z):=c_r\frac{1}{(1+|z|)^r},\ \int_{\R^d}dz Q_r(z)=1$.
\end{trivlist}

\begin{THM}[Stability Control for Markov Chains]
\label{MTHM_M}
Fix $\varepsilon>0 $ and a final deterministic time horizon $T>0$. For $h=T/N,\ N\in \N^*:=\N\setminus\{0\}$, we set for $i\in \N,\ t_i:=ih $.
Under (\textbf{A}), assuming that either \A{I${}_G $} or \A{I${}_{P,M} $} holds, and for $q>d$
there exist $C:=C(q)\ge 1,c\in (0,1]$ s.t. for all $0\le t_i<t_j\le T, (x,y)\in (\R^d)^2$: 
\begin{equation}
\label{CTR_M_G}
\chi_c(t_j-t_i,y-x)^{-1}|(p^h-p_\varepsilon^h)(t_i,t_j,x,y)|\le C \Delta_{\varepsilon,\gamma,q} ,
\end{equation}
where $p^h(t_i,t_j,x,.), p_\varepsilon^h(t_i,t_j,x,.) $ respectively stand for the transition densities  at time $t_j$ of the Markov Chains $Y $ and $Y^{(\varepsilon)} $   in \eqref{DYN_M_P} starting from $ x$ at time $t_i$. Also:
\begin{trivlist}
\item[-] If \A{I${}_G $} holds:
$$\chi_c(t_j-t_i,y-x):=p_c(t_j-t_i,y-x),$$
with $p_c$ as in Theorem \ref{MTHM}.
\item[-] If \A{I${}_{P,M} $}  holds:
\begin{equation*}
\chi_c(t_j-t_i,y-x):=\frac{c^{d}}{(t_j-t_i)^{d/2}} Q_{M-(d+5+\gamma)}\left(\frac{|y-x|}{(t_j-t_i)^{1/2}/c}\right).
\end{equation*}
\end{trivlist}
The behavior of $C:=C(q)$ is similar to what is described in Remark \ref{RQ_CONST}.
\end{THM}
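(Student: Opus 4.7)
The plan is to adapt the parametrix expansion used for Theorem \ref{MTHM} to the discrete setting of \eqref{DYN_M_P}, following the discrete McKean--Singer approach of Konakov and Mammen \cite{kona:mamm:00},\cite{kona:mamm:02}. For both chains $Y$ and $Y^{(\varepsilon)}$ I would write the density as a discrete parametrix series
\begin{equation*}
p^h(t_i,t_j,x,y)=\sum_{r=0}^{j-i}\tilde p^h\otimes_h H^{h,(\otimes_h r)}(t_i,t_j,x,y),
\end{equation*}
where $\tilde p^h(t_i,t_j,x,y)$ is the transition density of the proxy chain whose coefficients are frozen at the terminal point $y$, $H^h$ is the associated one-step parametrix kernel obtained by applying the difference of the generators of $Y$ and its proxy to $\tilde p^h$, and $\otimes_h$ denotes the discrete space-time convolution of time step $h$. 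An analogous representation holds for $p_\varepsilon^h$, $\tilde p_\varepsilon^h$, $H_\varepsilon^h$. A local-limit theorem for $\tilde p^h$---elementary in case \A{I${}_G$} and of Edgeworth type under \A{I${}_{P,M}$}---yields $|\partial_x^\nu \tilde p^h|(t_i,t_j,x,y)\le C(t_j-t_i)^{-|\nu|/2}\chi_c(t_j-t_i,y-x)$ for every multi-index $|\nu|\le 2$.

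I would then estimate $p^h-p_\varepsilon^h$ by telescoping each summand of the series: the difference $\tilde p^h\otimes_h H^{h,(\otimes_h r)}-\tilde p_\varepsilon^h\otimes_h H_\varepsilon^{h,(\otimes_h r)}$ splits into $r+1$ contributions, each containing a single factor of $\tilde p^h-\tilde p_\varepsilon^h$ or $H^h-H_\varepsilon^h$. A one-step computation reduces both differences to comparisons of the frozen generators applied to the proxy densities: the diffusion part is handled via $\Delta_{\varepsilon,\sigma,\gamma}$ combined with $|\nabla^2\tilde p^h|$ through the standard parametrix cancellation trick that trades a spatial H\"older increment against a time-singularity of order $(t_j-t_i)^{-1+\gamma/2}$, while the drift piece is controlled by $\Delta_{\varepsilon,b,q}$ through H\"older's inequality in space (conjugate exponents $q>d$ and $q'$), exploiting that $\|\nabla\tilde p^h(t_i,t_j,x,\cdot)\chi_c(t_j-t_i,y-\cdot)^{-1}\|_{L^{q'}}\le C(t_j-t_i)^{-1/2-d/(2q)}$, which is integrable in $t_j-t_i$ precisely because $q>d$. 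This yields a one-step bound $|H^h-H_\varepsilon^h|(t_i,t_j,x,y)\le C\,\Delta_{\varepsilon,\gamma,q}\,(t_j-t_i)^{-1+\alpha}\chi_c(t_j-t_i,y-x)$ for some $\alpha=\alpha(q,\gamma)>0$, exactly as in Lemmas \ref{ONE_STEP_C}--\ref{LEMME_DIFF_ITE}. Stability of $\chi_c$ under discrete convolution (Chapman--Kolmogorov in the Gaussian case, a polynomial-decay convolution inequality under \A{I${}_{P,M}$}) then lets me sum the telescoped series, the gamma-function-type treatment of the time singularity giving the convergent bound $C\Delta_{\varepsilon,\gamma,q}\chi_c(t_j-t_i,y-x)$ uniformly in $h$.

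The main obstacle I expect is the polynomial-decay case \A{I${}_{P,M}$}, where one must track carefully how the decay exponent of $Q$ is eroded at each operation: the various losses (two spatial derivatives of $\tilde p^h$ when comparing generators, the Edgeworth corrections required to approximate $\tilde p^h$ by a Gaussian, the H\"older cancellation of order $\gamma$, and the additional power of $(1+|z|)$ needed to pair against $\nabla\tilde p^h$ in $L^{q'}$) add up to $d+5+\gamma$, which is exactly what forces the threshold $M>2d+5+\gamma$ in \A{I${}_{P,M}$}, ensuring $M-(d+5+\gamma)>d$ so that the final $\chi_c$ remains integrable. A secondary technical point is that under \A{I${}_{P,M}$} the proxy density is only approximately Gaussian, so the Edgeworth corrections must be shown negligible at each step of the parametrix and of the telescoping; this is where the regularity $f_\xi\in C^5$ assumed in \eqref{CONC_INNO} is used.
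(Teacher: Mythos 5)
Your plan follows the paper's strategy (discrete parametrix series, telescoping the difference of the two series term by term, Edgeworth-type estimates for the frozen densities, H\"older cancellation in the kernel comparison, and polynomial-decay stability under discrete convolution), and the final shape of the bound is right. However there are two genuine under-specifications, both concerning the hard new ingredients under \A{I${}_{P,M}$}.

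First, the telescoping produces terms containing a single factor $\tilde p^h-\tilde p_\varepsilon^h$, so you need a \emph{quantitative sensitivity estimate} for the frozen densities themselves, of the form $|D_x^\alpha(\tilde p^h-\tilde p_\varepsilon^h)(t_i,t_j,x,y)|\lesssim \Delta_{\varepsilon,\sigma,\gamma}\,(t_j-t_i)^{-|\alpha|/2}\psi_{c,c_1}(t_j-t_i,y-x)$. You describe what is available as ``a local limit theorem for $\tilde p^h$'' and claim a ``one-step computation reduces both differences to comparisons of the frozen generators''; neither of these supplies the needed $\Delta_{\varepsilon,\sigma,\gamma}$-sensitivity of the densities. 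In the Gaussian case you get it by Taylor expanding the explicit Gaussian in its covariance (as in Lemma~\ref{LEM_MT}), but under \A{I${}_{P,M}$} there is no closed form, and the paper (Lemma~\ref{LEMME_COMP_MC}) instead compares the \emph{characteristic functions} of the two frozen chains: it splits the discrete martingale sum into two halves, expands binomially in multi-indices, and controls the resulting moment differences by Burkholder--Davis--Gundy, finally passing back via Fourier inversion. This lemma is where the real work is, and it is absent from your outline.

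Second, derivative bounds up to order $4$, not $2$, are required, and your accounting of the decay loss $d+5+\gamma$ is off. Because $L_{t_i}^h$ is an integral (not differential) operator, the kernel difference $H^h-H_\varepsilon^h$ is estimated by a second-order Taylor expansion with integral remainder of the one-step transition; the remainder term already involves $D_z^2\tilde p^h$, and taking differences of these second-order remainders (the term $\Delta_{22}\varphi_\lambda^{h,\varepsilon}$ in the paper) pushes you to third and ultimately fourth derivatives. The resulting loss in polynomial decay is: $d$ from $d$-dimensional integrability of $Q_M$, $5$ from four derivative orders plus one more for the comparison, and $\gamma$ from the H\"older cancellation. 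The ``additional power of $(1+|z|)$ needed to pair against $\nabla\tilde p^h$ in $L^{q'}$'' you invoke is not part of this bookkeeping (the paper in fact only writes out the Markov chain kernel comparison for $q=+\infty$ and merely indicates the $q<\infty$ extension); putting it in your count while listing only two derivative orders gives the right total $d+5+\gamma$ by accident rather than by design.
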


\subsection{On Some Related Applications.}

\subsubsection{Model Sensitivity for Option Prices.}
Assume for instance that the (log)-price of a financial asset is given by the dynamics in \eqref{1}. Under suitable assumptions the price of an option on that asset writes at time $t$ and when $X_t=x $ as $\E[f(\exp(X_T^{t,x}))] $ up to an additional discounting factor. In the previous expression $f$ is the pay-off function. For a rather large class of pay-offs, say measurable functions with polynomial growth, including irregular ones,  Theorem \ref{MTHM} allows to specifically quantify how a perturbation of the coefficients impacts the option prices. Precisely for a given $\varepsilon>0 $, under \A{A}:
\begin{eqnarray*}
|{\mathcal E}_\varepsilon(t,T,x,f)|:=|\E[f(\exp(X_T^{t,x}))]-\E[f(\exp(X_T^{(\varepsilon),t,x}))]|
\le  C \Delta_{\varepsilon,\gamma,q}  \int_{\R^d} f(\exp(y))p_c(T-t,x,y)dy.
\end{eqnarray*}
This previous control can be as well exploited to investigate perturbations of a model which provides some closed formulas, e.g. a perturbation of the Black and Scholes model that would include a stochastic volatility taking for instance $\sigma_\varepsilon(x)=\sigma+\varepsilon \psi(x) $ for some bounded $\gamma $-H\"older continuous function $\psi$ and $\varepsilon $ small enough. In that case, assuming that the drift is known and unperturbed, we have $\Delta_{\varepsilon,\gamma,\infty}=|\sigma_\varepsilon-\sigma|_\gamma=\varepsilon |\psi|_\gamma $.

In connection with this application, we can quote the work of Corielli \textit{et al.} \cite{cori:fosc:pasc:10} who give estimates on option prices through parametrix expansions truncating the series. Some of their results, see e.g. their Theorem 3.1, can be related to a perturbation analysis since they obtain an approximation of an option price for a local volatility model in terms of the Black-Scholes price and a correction term corresponding  to the first order term in the parametrix series. A more probabilistic approach to similar problems can be found in Benhamou \textit{et al.} \cite{benh:gobe:miri:10}. However, none of the indicated works indeed deals with the global perturbation analysis we perform here.

\subsubsection{Weak Error Analysis}
It is well known that if the coefficients $b,\sigma$ in \eqref{1} are smooth and $a$ satisfies the non-degeneracy condition \A{A2}, then the weak error on the densities for the approximation by the Euler scheme is well controlled. Precisely, for a given time step $h>0$, let us set for $i\in \N, t_i:=ih $. Introduce now, for a given starting time $t_i$, the Euler scheme $X_{t_i}^h=x,\ \forall j> i,\ X_{t_{j+1}}^h=X_{t_j}^h+b(t_j,X_{t_j}^h)h+\sigma(t_j,X_{t_j}^h) (W_{t_{j+1}}-W_{t_j})$ and denote by $p^h(t_i,t_j,x,.) $ its density at time $t_j$. The dynamics of the Euler scheme clearly enters the scheme \eqref{DYN_M_P}. It can be derived from Konakov and Mammen \cite{kona:mamm:02} (see also Bally and Talay \cite{ball:tala:96:2} for an extension to the hypoelliptic setting) that:
$$|p-p^h|(t_i,t_j,x,y)\le Ch p_c(t_j-t_i,y-x).$$

If the coefficients in \eqref{1} are not smooth, it is then possible to use a mollification procedure, 
taking for $x\in \R^d$,
 $b_\varepsilon(t,x):=b(t,.)\star \rho_{\varepsilon}(x),\sigma_\varepsilon(t,x):=\sigma(t,.)\star \rho_{\varepsilon}(x)$ with $\rho_\varepsilon(x):=\varepsilon^{-d}\rho(x/\varepsilon) $ and 
 $\rho\in C^\infty(\R^d,\R^+),\ \int_{\R^d}\rho(x)dx=1, \supp(\rho)\subset K $ for some compact set $K$ of $\R^d$. For the mollifying kernel  $\rho_\varepsilon $,  one then easily checks that for $\gamma $-H\"older continuous in space coefficients $b,\sigma$ there exists $C$ s.t.
 \begin{equation}
\begin{split}
 \sup_{t\in [0,T]}|b(t,.)-b_\varepsilon(t,.)|_\infty\le C
 \varepsilon^\gamma,\ \sup_{t\in [0,T]}|\sigma(t,.)-\sigma_\varepsilon(t,.)|_\eta\le C \varepsilon^{\gamma-\eta},\ \eta\in (0,\gamma).
\end{split}
\label{CTR_INDICATIF}
 \end{equation}
 The important aspect is that we lose a bit with respect to the sup norm when investigating the H\"older norm.
 We then have by Theorems \ref{MTHM} and \ref{MTHM_M} and their proof, that, for $\gamma $-H\"older continuous in space coefficients $b,\sigma$ and taking $q=\infty$, there exist $c,C$ s.t. for all $0\le s<t\le T, 0\le t_i<t_j\le T,\  (x,y)\in (\R^d)^2$:
 \begin{eqnarray*}
 |(p-p_\varepsilon)(s,t,x,y)|\le CC_\eta \varepsilon^{\gamma-\eta} p_c(t-s,y-x),\ 
 |(p^h-p_\varepsilon^h)(t_i,t_j,x,y)|\le CC_\eta \varepsilon^{\gamma-\eta}
 p_c(t_j-t_i,y-x),
 \end{eqnarray*}
where the constant $C_\eta$ explodes when $\eta$ tends to $0$.

To investigate the global weak error $(p-p^h)(t_i,t_j,x,y)=\{(p-p_\varepsilon)+(p_\varepsilon-p_\varepsilon^h) +(p_\varepsilon^h-p^h)\}(t_i,t_j,x,y)  $, it therefore remains to analyze the contribution $(p_\varepsilon-p_\varepsilon^h)(t_i,t_j,x,y) $. The results of \cite{kona:mamm:02} indeed apply but yield $|(p_\varepsilon-p_\varepsilon^h)(t_i,t_j,x,y)| \le C_ \varepsilon h p_c(t_j-t_i,y-x) $ where $C_\varepsilon$ is explosive when $\varepsilon $ goes to zero. The global error thus writes:
\begin{equation*}
|(p-p^h)(t_i,t_j,x,y)|\le C\{ C_\eta\varepsilon^{\gamma-\eta}+C_\varepsilon h\}p_c(t_j-t_i,y-x),
\end{equation*}
and a  balance is needed to derive a global error bound. This is precisely the analysis we perform in \cite{kona:meno:15:2}. In this work, we extend to densities (up to a slowly growing factor) the results previously obtained by Mikulevi{\v{c}}ius and Platen \cite{miku:plat:91} on the weak error, i.e. they showed
$|\E[f(X_T)-f(X_T^h)]|\le Ch^{\gamma/2} $ provided $f\in C_b^{2+\gamma}(\R^d,\R) $.
Precisely, we obtain through a suitable analysis of the constants $C_\eta, C_\varepsilon$, which respectively depend on behavior of the parametrix series and of the derivatives of the heat kernel with mollified coefficients, that $|p-p^h|(t_i,t_j,x,y)\le Ch^{\gamma/2-\psi(h)} p_c(t_j-t_i,y-x)$ for a function $\psi(h)$ going to 0 as $h\rightarrow 0$ (which is induced by the previous loss of $\eta $ in \eqref{CTR_INDICATIF}). In the quoted work, we also obtain some error bounds for piecewise smooth drifts having a countable set of discontinuities. This part explicitly requires the stability result of Theorems \ref{MTHM}, Theorems \ref{MTHM_M} for $q<+\infty$. The idea being that the difference between the piece-wise smooth drift and its smooth approximation (actually the mollification procedure is only required around the points of discontinuity), is well controlled in $L^q$ norm, $q<+\infty $. 

\subsubsection{Extension to some Kinetic Models}
The results of Theorems \ref{MTHM} and \ref{MTHM_M} should extend without additional difficulties to the case of degenerate diffusions of the form:
\begin{equation}
\label{EQ_DEG}
\begin{split}
dX_t^1&=b(t,X_t) dt+\sigma(t,X_t) dW_t,\\
dX_t^2&= X_t^1 dt,
\end{split}
\end{equation}
denoting $X_t=(X_t^1,X_t^2) $, under the same previous assumptions on $b,\sigma$ when we consider perturbations of the non-degenerate components, i.e. for a given $\varepsilon>0$,  $X_t^{(\varepsilon)}=(X_t^{1,(\varepsilon)},X_t^{2,(\varepsilon)}) $ where:
\begin{equation}
\label{EQ_DEG_PERT}
\begin{split}
dX_t^{1,(\varepsilon)}&=b_{\varepsilon}(t,X_t^{(\varepsilon)}) dt+\sigma_\varepsilon(t,X_t^{(\varepsilon)}) dW_t,\\
dX_t^{2,(\varepsilon)}&= X_t^{1,(\varepsilon)} dt.
\end{split}
\end{equation}
Indeed, under \textbf{(A)}, the required parametrix expansions of the densities associated with the solutions of equation \eqref{EQ_DEG}, \eqref{EQ_DEG_PERT} have been established in \cite{kona:meno:molc:10}.\\ 

\subsubsection{A posteriori Controls in Parameter Estimation.}
Let us consider to illustrate this application a parametrized family of diffusions of the form:
\begin{equation}
\label{PARAM_DIFF_STAT}
dX_t=b(t,X_t)dt+\sigma(\vartheta,t,X_t)dW_t,
\end{equation}
where $\vartheta \in \Theta \subset \R^d $, the coefficients $b,\sigma$ are smooth, bounded and the non-degeneracy condition \A{A2} holds. A natural \textit{practical} problem consists in estimating the \textit{true} parameter $\vartheta $ from an observed discrete sample $(X_{t_i^n})_{i\in \leftB 0,n\rightB} $ where the $\{(t_i)_{i\in \leftB 0,n\rightB} \} $ form a partition of the observation interval, i.e. if $T=1$, $0=t_0^n<t_1^n<\cdot<t_n^n=1$.

Introducing the contrast:
$$U^n(\vartheta):=\frac 1n\sum_{i=1}^n \Big[\log \big(\det(a(\vartheta,t_{i-1}^n,X_{t_{i-1}^n})) \big)+ \langle a^{-1}(\vartheta,t_{i-1}^n,X_{t_{i-1}^n})X_i^n,X_i^n\rangle \Big], \ \forall i\in \leftB 1,n\rightB, X_i^n:=\frac{X_{t_i^n}-X_{t_{i-1}^n}}{\sqrt{t_i^n-t_{i-1}^n}},$$
and denoting by $\hat \vartheta_n $ the corresponding minimizer, it was shown by Genon-Catalot and Jacod \cite{geno:jaco:93} that under $\P^{\vartheta} $, $\sqrt{n}(\hat \vartheta_n-\vartheta) $ converges in law towards a \textit{mixed normal variable} $S$ which is, conditionally to $\F_1:=\sigma\{ (X_s)_{s\in [0,1]} \} $, centered and Gaussian. For a precise expression of the covariance which depends on the whole path of $(X_t)_{t\in [0,1]}$ we refer to Theorem 3 and its proof in \cite{geno:jaco:93}.

This means that, when $n$ is large, conditionally to $\F_1 $, we have on a subset $ \bar \Omega \subset \Omega$ which has \textit{high probability}, that $| \hat \vartheta_n-\vartheta|\le \frac{C}{\sqrt n} $ for a certain threshold $C$. Setting $\varepsilon_n=n^{-1/2} $, $\sigma_{\varepsilon_n}(t,x):=\sigma(\hat \vartheta_n,t,x) $ and with a slight abuse of notation $\sigma(t,x):=\sigma(\vartheta,t,x) $, one gets that, on  $\bar \Omega $: 
\begin{eqnarray*}
|\sigma(t,x)-\sigma_{\varepsilon_n}(t,x)-(\sigma(t,y)-\sigma_{\varepsilon_n}(t,y))|\le |x-y|\wedge C n^{-1/2}\Rightarrow |\sigma-\sigma_{\varepsilon}|_\eta\le (Cn^{-1/2})^{1-\eta},\ \eta\in (0,1].
\end{eqnarray*}
We can then invoke our Theorem \ref{MTHM} to compare the densities of the diffusions with the estimated parameter and 
the exact one in \eqref{PARAM_DIFF_STAT}. 



The paper is organized as follows. We recall in Section \ref{SEC_PARAM_DENS} some basic facts about parametrix expansions for the densities of diffusions and Markov Chains. We then detail in Section \ref{SEC_STAB} how to perform a stability analysis of the parametrix expansions in order to derive the results of Theorems \ref{MTHM} and \ref{MTHM_M}. 


\mysection{Derivation of formal series expansion for densities}
\label{SEC_PARAM_DENS}
\subsection{Parametrix Representation of the Density for Diffusions}
In the following, for given $(s,x)\in \R^+\times \R^d $, we use the standard Markov notation $(X_t^{s,x})_{t\ge s}$
to denote the solution of \eqref{1} starting from $x$ at time $s$.

Assume that  $(X_t^{s,x})_{t\ge s}$ has for all $t>s$ a smooth density $p(s,t,x,.)$ (which is the case if additionally to \A{A} the coefficients are smooth see e.g. Friedman \cite{frie:64}). We would like to estimate this density at a given point $y\in \R^d$.
To this end, we introduce the following Gaussian inhomogeneous process with spatial variable frozen at $y$. For all $(s,x)\in [0,T]\times \R^d,\ t\ge s $ we set:
\begin{equation*}
\tilde X_t^{y}=x +\int_s^t \sigma(u,y) dW_u.
\end{equation*}
Its density $\tilde p^y $ readily satisfies the  Kolmogorov backward equation:
\begin{equation}
\label{KOLM_GEL}
\begin{cases}
\partial_u \tilde p^{y}(u,t,z,y)+ \tilde L_u^{y} \tilde p^{y} (u,t,z,y)=0,\ s\le u<t, z\in \R^d,\\
 \tilde p^{y}(u,t,.,y) \underset{u\uparrow t}{\rightarrow} \delta_y(.),
 \end{cases}
\end{equation}
where for all $\varphi \in C_0^2(\R^d,\R)$ (twice continuously differentiable functions with compact support) and $z\in \R^d $:
$$\tilde L_u^y \varphi(z)=\frac{1}{2}{\rm Tr}\left(\sigma \sigma^*(u,y) D_z^2 \varphi(z) \right),$$
stands for the generator of $\tilde X^y $ at time $u$.

On the other hand, since we have assumed the density of $X$ to be smooth, it must satisfy the Kolmogorov forward equation (see e.g. Dynkin \cite{dynk:65}). For a given starting point $x\in \R^d$ at time $s$,
\begin{eqnarray}
\label{KOLM}
\begin{cases}
\partial_u  p(s,u,x,z)-L_u^* p (s,u,x,z)=0,\ s< u\le t, z\in \R^d,\\
 p(s, u,x,.) \underset{u\downarrow s}{\rightarrow} \delta_x(.),
 \end{cases}
\end{eqnarray}
where $L_u^* $ stands for the \textit{formal} adjoint (which is again well defined if the coefficients in \eqref{1} are smooth) of the generator of \eqref{1} which for all $\varphi \in C_0^2(\R^d,\R), z\in \R^d $ writes:
$$L_u\varphi(z)=\frac{1}{2}{\rm Tr}\left(\sigma \sigma^*(u,z) D_z^2 \varphi(z) \right)+\langle b(u,z),D_z\varphi(z)\rangle.$$

Equations \eqref{KOLM_GEL}, \eqref{KOLM} yield the formal expansion below which is initially due  to McKean and Singer \cite{mcke:sing:67}.
\begin{eqnarray}
\label{EQ_PARAM}
(p-\tilde p^{y})(s,t,x,y)=\bint{s}^{t} du \partial_u \bint{\R^d}^{} dz p(s,u,x,z) \tilde p^{y}(u,t,z,y)\nonumber\\
=\bint{s}^{t} du \bint{\R^d}^{}dz \left( \partial_u p(s,u,x,z)\tilde p^{y}(u,t,z,y)+  p(s,u,x,z) \partial_u\tilde p^{y}(u,t,z,y) \right)\nonumber \\
=\bint{s}^{t} du \bint{\R^d}^{}dz \left( L_u^* p(s,u,x,z)\tilde p^{y}(u,t,z,y)-  p(s,u,x,z) \tilde L_u^{y}\tilde p^{y}(u,t,z,y) \right)\nonumber \\
=\bint{s}^{t} du \bint{\R^d}^{}dz  p(s,u,x,z)(L_u -\tilde L_u^{y})\tilde p^{y}(u,t,z,y), 
\end{eqnarray}
using the Dirac convergence for the first equality, equations \eqref{KOLM} and \eqref{KOLM_GEL}  for the third one. We eventually take the adjoint for the last equality. Note carefully that the differentiation under the integral is also here formal since we would need to justify that it can actually be performed using some growth properties of the density and its derivatives which we \textit{a priori} do not know.

Let us now introduce the notation  
$$f\otimes g (s,t,x,y)=\bint{s}^{t} du \bint{\R^d}^{}dz f(s,u,x,z) g(u,t,z,y) $$ 
for the time-space convolution and let us define $\tilde p(s,t,x,y):=\tilde p^{y}(s,t,x,y)$, that is in $\tilde p(s,t,x,y) $ we consider the density of the frozen process at the final point and observe it at \textit{that specific} point.  We now introduce the \textit{parametrix} kernel:
\begin{equation}
\label{PARAM_KER}
H(s,t,x,y):=(L_s-\tilde L_s)\tilde p(s,t,x,y):=(L_s-\tilde L_s^{y})\tilde p^y(s,t,x,y).
\end{equation}
With those notations equation \eqref{EQ_PARAM} rewrites:
 $$(p-\tilde p)(s,t,x,y)=p\otimes H (s,t,x,y).$$
From this expression, the idea then consists in iterating this procedure for $p(s,u,x,z) $
in \eqref{EQ_PARAM} introducing the density of a process with frozen characteristics in $z$ which is here the integration variable. This yields to iterated convolutions of the kernel and leads to the formal expansion:
 \begin{eqnarray}
 \label{DEV_SER}
p(s,t,x,y)=\bsum{r=0}^{\infty} \tilde p\otimes H^{(r)}(s,t,x,y),
\end{eqnarray}
where $\tilde p\otimes H^{(0)}=\tilde p, H^{(r)}=H\otimes H^{(r-1)},\ r\ge 1$. Obtaining estimates on $p$ from the formal expression \eqref{DEV_SER} requires to have good controls on the right-hand side. The remarkable property of this formal expansion is now that the right-hand-side of \eqref{DEV_SER} only involves controls on Gaussian densities which in particular will provide, associated with our assumption \A{A} a \textit{smoothing} in time property for the kernel $H$. 

\begin{PROP}
\label{PROP_PARAM_D}
Under the sole assumption \A{A},  for $t>s $, the density of $X_t^{x,s}$ solving \eqref{1}
exists and can be written as in \eqref{DEV_SER}. 
\end{PROP}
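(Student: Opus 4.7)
The plan is to establish the representation \eqref{DEV_SER} first in a regularized setting where all formal manipulations in \eqref{EQ_PARAM} are justified, and then pass to the limit using uniqueness of the martingale problem and uniform Gaussian controls on the parametrix series.

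More precisely, I would first introduce a mollification of the coefficients: pick $b_n(t,x):=b(t,\cdot)\star \rho_n(x)$ and $\sigma_n(t,x):=\sigma(t,\cdot)\star \rho_n(x)$ (with a smooth compactly supported mollifier $\rho_n$), and also regularize in time if needed. Thanks to the convolution, $(b_n,\sigma_n)$ are smooth and satisfy \A{A} with constants that can be chosen uniformly in $n$ (indeed ellipticity, boundedness and the $\gamma$-Hölder seminorm are all preserved by convolution with a probability kernel). For the corresponding SDE driven by $(b_n,\sigma_n)$ the solution $X^{n,s,x}$ admits a smooth transition density $p_n(s,t,x,\cdot)$, so \eqref{KOLM_GEL} and \eqref{KOLM} hold in the classical sense, the differentiation under the integral in \eqref{EQ_PARAM} is legitimate, and iterating the one-step identity $(p_n-\tilde p_n)=p_n\otimes H_n$ together with the bound on the iterated kernels (see below) yields the representation
\begin{equation*}
p_n(s,t,x,y)=\sum_{r=0}^{\infty}\tilde p_n\otimes H_n^{(r)}(s,t,x,y),
\end{equation*}
where $\tilde p_n, H_n$ are the obvious analogues of $\tilde p, H$ built from the frozen Gaussian process with coefficient $\sigma_n(\cdot,y)$.

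Next I would establish the uniform-in-$n$ control of the series. Writing
\begin{equation*}
H_n(s,t,x,y)=\tfrac12\Tr\!\bigl((a_n(s,x)-a_n(s,y))D_x^2 \tilde p_n^y(s,t,x,y)\bigr)+\langle b_n(s,x),D_x\tilde p_n^y(s,t,x,y)\rangle,
\end{equation*}
together with the standard Gaussian bounds $|D_x^k \tilde p_n^y(s,t,x,y)|\le C(t-s)^{-k/2}p_c(t-s,y-x)$ and the Hölder estimate $|a_n(s,x)-a_n(s,y)|\le \kappa|x-y|^\gamma$ (uniform in $n$), one gets the smoothing control
\begin{equation*}
|H_n(s,t,x,y)|\le C\bigl((t-s)^{-1+\gamma/2}+(t-s)^{-1/2}\bigr)\,p_c(t-s,y-x),
\end{equation*}
where the factor $|x-y|^\gamma$ produced by the Hölder norm of $a_n$ is absorbed through the identity $|x-y|^\alpha p_c(t-s,y-x)\le C(t-s)^{\alpha/2} p_{c'}(t-s,y-x)$. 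Convolving with the Gaussian density $\tilde p_n$ and iterating via the standard Beta/Gamma computation then yields the uniform bound
\begin{equation*}
|\tilde p_n\otimes H_n^{(r)}(s,t,x,y)|\le C^{r+1}\frac{\Gamma(\gamma/2)^{r}}{\Gamma(1+r\gamma/2)}(t-s)^{r\gamma/2}p_c(t-s,y-x),
\end{equation*}
so that the series converges absolutely and uniformly in $n$, with the same Gaussian upper bound. The main technical obstacle is precisely the uniformity in the mollification parameter of these controls and the fact that one must track explicitly the dependence on $\gamma$ and not on any higher regularity.

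Finally I would pass to the limit $n\to\infty$. Under \A{A}, the martingale problem associated with the generator $L$ of \eqref{1} is well-posed (Bass--Perkins \cite{bass:perk:09}, \cite{meno:10}), hence the laws of $X^{n,s,x}$ converge weakly to that of $X^{s,x}$. Since $a_n\to a$, $b_n\to b$ pointwise (a.e.), we have $H_n\to H$ and $\tilde p_n^y\to \tilde p^y$ pointwise, and the uniform Gaussian majorant obtained above lets me apply dominated convergence in each term of the series, as well as in the series itself. For the left-hand side, the uniform Gaussian upper bound on $p_n$ (deduced from the parametrix series) and standard tightness/density arguments show that $p_n(s,t,x,\cdot)$ converges (in $L^1$, or as a measure) to a density $p(s,t,x,\cdot)$ of $X_t^{s,x}$, whose existence is thereby obtained. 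Identifying the limit in both sides gives \eqref{DEV_SER}. The delicate point here is to transfer the convergence of laws into convergence of the densities themselves: this is handled exactly by the uniform parametrix bound, which ensures that $p_n(s,t,x,\cdot)$ is tight as a sequence of absolutely continuous measures dominated by a fixed Gaussian, so any weak limit remains absolutely continuous and can only be the density of the unique limit law.
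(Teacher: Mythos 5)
Your proposal is correct and follows essentially the same route as the paper's proof: mollify the coefficients (preserving the constants in \A{A}), invoke the classical theory to obtain the parametrix series for the regularized densities, establish Gaussian bounds on the iterated kernels that are uniform in the mollification parameter, and pass to the limit using dominated convergence term by term together with weak convergence of laws guaranteed by well-posedness of the martingale problem under \A{A}. The only cosmetic difference is that the paper organizes the final identification by first showing the limiting series is a probability density (mass one and nonnegativity) and then matching expectations against continuous bounded test functions, whereas you phrase the same content via tightness of the Gaussian-dominated densities; both variants are standard and equivalent.
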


\begin{proof}The proof can already be derived from a sensitivity argument. We first introduce two parametrix series of the form \eqref{DEV_SER}. Namely,
\begin{equation}
p(s,t,x,y):=\widetilde{p}(s,t,x,y)+\sum_{r=1}^{\infty
} \tilde p\otimes H^{(r)}(s,t,x,y)
\label{3}
\end{equation}%
and
\begin{equation}
p_{\varepsilon}(s,t,x,y):=\widetilde{p}_{\varepsilon}(s,t,x,y)+\sum_{r=1}^{\infty }\tilde p_{\varepsilon}\otimes H_{\varepsilon}^{(r)}(s,t,x,y).\label{4}
\end{equation}
Let us point out that, at this stage, $p$ and $p_{\varepsilon}$ are defined as sums of series. The purpose is then to identify those sums with the densities of the processes $X_t^{s,x},X_t^{(\varepsilon),s,x} $ at point $y$.

%
The convergence of the series \eqref{3}  and \eqref{4} is in some sense \textit{standard} (see e.g. \cite{meno:10} or Friedman \cite{frie:64}) under \A{A}. We recall for the sake of completeness the key steps for \eqref{3}. 

From direct computations, there exist $c_1\ge 1, c\in (0,1]$ s.t. for all $T>0$ and all multi-index $\alpha, |\alpha|\le 8 $,
\begin{align}
\forall 0\le u<t\le T,\ (z,y)\in (\R^d)^2,\ \left|D_z^\alpha \widetilde{p}(u,t,z,y)\right|\leq \frac{c_1
}{(t-u)^{|\alpha|/2}} p_{c}(t-u,y-z),  \label{5}
\end{align}%
where 
$$ p_{c}(t-u,y-z)=\frac{c^{d/2}}{(2\pi(t-u))^{d/2}}\exp \left( -\frac{c}{2}  \frac{|y-z|^2}{t-u}\right),$$
stands for the usual Gaussian density in $\R^d$ with 0 mean and covariance $(t-u)c^{-1}I_d $. From \eqref{5}, the boundedness of the drift and the H\"older continuity in space  of the diffusion matrix we readily get that there exists $c_1\ge 1,\ c\in (0,1]$,
\begin{equation}
\left\vert H(u,t,z,y)\right\vert \leq \frac{c_1(1\vee T^{(1-\gamma)/2})}{%
(t-u)^{1-\gamma /2}}p_{c}(t-u,z-y).  \label{8}
\end{equation}%
Now the key point is that the control \eqref{8} yields an integrable singularity giving a smoothing effect in time once integrated in space in the time-space convolutions  appearing in \eqref{3} and \eqref{4}. It follows by induction that:

\begin{eqnarray}
 |\widetilde{p}\otimes H^{(r)}(s,t,x,y)| 
\leq  ((1\vee T^{(1-\gamma)/2}) c_1)^{r+1}\prod_{i=1}^r B(\frac{\gamma }{2},1+(i-1)\frac{\gamma }{2})
p_{c}(t-s,y-x) (t-s)^{\frac{r \gamma}{2}}\notag\\
=  \frac{((1\vee T^{(1-\gamma)/2})c_1)^{r+1}\left[ \Gamma (\frac{\gamma }{2})\right] ^{r}}{\Gamma
(1+r\frac{\gamma }{2})}p_{c}(t-s,y-x) (t-s)^{\frac{r \gamma}{2}},  \label{10}
\end{eqnarray}  
where for $a,b>0, B(a,b)=\int_0^1 t^{-1+a}(1-t)^{-1+b}dt $ stands for the $\beta $-function, and using as well the identity $B(a,b)=\frac{\Gamma(a)\Gamma(b)}{\Gamma(a+b)} $ for the last equality.
These bounds readily yield the convergence of the series as well as a Gaussian upper-bound. Namely
\begin{eqnarray}
\label{GaussianBound}
p(s,t,x,y)\le  c_1 \exp((1\vee T^{(1-\gamma)/2})c_1[(t-s)^{\gamma/2}]) p_c(t-s,y-x).
\end{eqnarray}

An important application of the \textit{stability} of the perturbation consists in considering coefficients $b_{\varepsilon}:=b\star \zeta_{\varepsilon},\sigma_\varepsilon:=\sigma\star \zeta_{\varepsilon} $
in \eqref{4}, where $\zeta_{\varepsilon}$ is a mollifier in time and space. For mollified coefficients which satisfy the non-degeneracy assumption \A{A2}, the existence and smoothness of the density of 
 $X^{(\varepsilon)} $ is a standard property (see e.g. Friedman \cite{frie:64}, \cite{frie:75}). It is also well known, see \cite{kona:mamm:00}, that this density $p_\varepsilon $ admits the series representation given in  \eqref{4}. 
 Observe now carefully that the previous Gaussian bounds also hold for $p_{\varepsilon} $ uniformly in $\varepsilon$ and independently
of the mollifying procedure. This therefore gives that
\begin{equation}
\label{UCV}
p_{\varepsilon}(s,t,x,y)\underset{\varepsilon \rightarrow 0}{\longrightarrow} p(s,t,x,y),
\end{equation}
boundedly 	and uniformly. Thus, for every continuous bounded function $f$ we derive from  
the bounded convergence theorem and \eqref{GaussianBound} that for all $0\le s<t\le T, \ x\in \R^d $:
\begin{eqnarray}
\label{CONV1}
\E[f(X_t^{(\varepsilon),s,x})]=\int_{\R^d}f(y)p_{\varepsilon}(s,t,x,y)dy \underset{\varepsilon\rightarrow 0}{\longrightarrow}\int_{\R^d}f(y)p(s,t,x,y)dy.
\end{eqnarray} 
Taking $f=1$ gives that $\int_{\R^d} p(s,t,x,y)dy=1$ and the uniform convergence in \eqref{UCV} gives that $p(s,t,x,.) $ is non negative. We therefore derive that $p(s,t,x,\cdot) $ is a probability density on $\R^d$.


On the other hand, under \A{A}, we can derive from Theorem 11.3.4 of \cite{stro:vara:79} that $(X_s^{\varepsilon})_{s\in [0,T]}\overset{({\rm law})}{\underset{\varepsilon \rightarrow 0}{\Longrightarrow}}  (X_s)_{s\in [0,T]}$. This gives that for any bounded continuous function $f$:
$$\E[f(X_t^{(\varepsilon),s,x})] \underset{\varepsilon\rightarrow 0}{\longrightarrow} \E[f(X_t^{s,x})].$$
This convergence and \eqref{CONV1} then yield that the random variable $X_t^{s,x} $ admits $p(s,t,x,\cdot) $ as density.

We can thus now conclude that the processes $X,X^{(\varepsilon)}$ in \eqref{1}, \eqref{2} have transition densities given by the sum of the series 
\eqref{3}, \eqref{4}. 

\end{proof} 

\subsection{Parametrix for Markov Chains}
\label{PARAM_MC}
One of the main advantages of the formal expansion in \eqref{DEV_SER} is that it has a direct discrete counterpart in the Markov  chain setting. Indeed, denote by $(Y_{t_j}^{t_i,x})_{j\ge i} $ the Markov chain with dynamics \eqref{DYN_M_P} starting from $x $ at time $t_i$. Observe first that if the innovations $(\xi_k)_{k\ge 1} $ have a density then so does the chain at time $t_k $.

Let us now introduce its generator at time $t_i$, i.e. for all $\varphi \in C_0^2(\R^d,\R),\ x\in \R^d $:
$$L_{t_i}^h\varphi(x):=h^{-1}\E[\varphi(Y_{t_{i+1}}^{t_i,x})-\varphi(x)].$$

In order to give a representation of the density of $p^h(t_i,t_j,x,y) $ of $Y_{t_j}^{t_i,x} $ at point $y$ for $j>i$, we introduce similarly to the continuous case, the Markov chain (or inhomogeneous random walk) with coefficients frozen in space at $y$. For given $(t_i,x)\in [0,T]\times \R^d,\ t_j\ge t_i $ we set:
$$\tilde Y_{t_j}^{t_i,x,y}:=x+h^{1/2}\bsum{k=i}^{j-1} \sigma(t_k,y) \xi_{k+1},$$
and denote its density by $\tilde p^{h,y}(t_i,t_j,x,.) $.
Its generator at time $t_i $ writes for all $\varphi \in C_0^2(\R^d,\R), x\in \R^d $:
$$\tilde L_{t_i}^{h,y}\varphi(x)= h^{-1}\E[\varphi(\tilde Y_{t_{i+1}}^{t_i,x,y})-\varphi(x)].$$

Using the notation $\tilde p^h(t_i,t_j,x,y):=\tilde p^{h,y}(t_i,t_j,x,y) $, we introduce now for $0\le i<j\le N $ the \textit{parametrix} kernel:
$$H^h(t_i,t_j,x,y):=(L_{t_i}^h-\tilde L_{t_i}^{h,y})\tilde p^h(t_i+h,t_j,x,y). $$
Analogously to Lemma 3.6 in \cite{kona:mamm:00}, which follows from a direct algebraic manipulation, we derive the following representation for the density  which can be viewed as the Markov chain analogue of Proposition \ref{PROP_PARAM_D}.
\begin{PROP}[Parametrix Expansion for the Markov Chain]
\label{PROP_PARAM_C}
Assume \A{A} is in force. Then, for $0\le t_i<t_j\le T $,
$$p^h(t_i,t_j,x,y)=\sum_{r=0}^{j-i}\tilde p^h \otimes_h H^{h,(r)}(t_i,t_j,x,y),$$
where the discrete time convolution type operator $\otimes_{h} $ is defined by
$$f\otimes_h g(t_i,t_j,x,y)=\sum_{k=0}^{j-i-1}h \int_{\R^d} f(t_i,t_{i+k},x,z) g(t_{i+k},t_j,z,y)dz. $$
Also $g\otimes_h H^{h,(0)}=g $ and for all $r\ge 1, H^{h,(r)}:=H^{h}\otimes_h H^{h,(r-1)} $ denotes the $r$-fold discrete convolution of the kernel $H^h$.

\end{PROP}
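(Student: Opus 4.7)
The plan is to mimic the McKean-Singer derivation from the proof of Proposition \ref{PROP_PARAM_D}, replacing the time differentiation of the bridge functional $\int p\cdot \tilde p^y $ (which there relied on the Kolmogorov equations, hence on regularity) by a discrete Abel summation by parts, which requires no smoothness at all. For fixed integers $0\le i<j\le N $ and $(x,y)\in (\R^d)^2 $, I would introduce for each $k\in \leftB i,j\rightB$ the bridge quantity
$$\Phi(k):=\int_{\R^d} p^h(t_i,t_k,x,z)\,\tilde p^{h,y}(t_k,t_j,z,y)\,dz.$$
With the usual convention that the kernels degenerate to a Dirac mass when the time indices coincide, one has $\Phi(i)=\tilde p^h(t_i,t_j,x,y) $ and $\Phi(j)=p^h(t_i,t_j,x,y) $, so telescoping gives $p^h(t_i,t_j,x,y)-\tilde p^h(t_i,t_j,x,y)=\sum_{k=i}^{j-1}\bigl(\Phi(k+1)-\Phi(k)\bigr)$.

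The core algebraic step is to identify each one-step increment with the action of the parametrix kernel $H^h $. Expanding $p^h(t_i,t_{k+1},\cdot) $ via one forward step of Chapman-Kolmogorov for the true chain and $\tilde p^{h,y}(t_k,t_j,\cdot) $ via one backward step of Chapman-Kolmogorov for the frozen chain, the resulting inner one-step integrals are exactly the expectations $\E[\tilde p^{h,y}(t_{k+1},t_j,Y_{t_{k+1}}^{t_k,z'},y)] $ and $\E[\tilde p^{h,y}(t_{k+1},t_j,\tilde Y_{t_{k+1}}^{t_k,z',y},y)] $ respectively. By the very definitions of $L_{t_k}^h $ and $\tilde L_{t_k}^{h,y} $, their difference equals $h\,(L_{t_k}^h-\tilde L_{t_k}^{h,y})\tilde p^{h,y}(t_{k+1},t_j,\cdot,y)(z')=hH^h(t_k,t_j,z',y) $. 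Summing in $k$ and reading off the definition of $\otimes_h $ yields the one-step identity
$$p^h(t_i,t_j,x,y)=\tilde p^h(t_i,t_j,x,y)+(p^h\otimes_h H^h)(t_i,t_j,x,y).$$

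To conclude, I would iterate this identity by substituting $\tilde p^h+p^h\otimes_h H^h $ for $p^h $ on the right-hand side, obtaining after $R$ iterations the finite sum $p^h=\sum_{r=0}^{R-1}\tilde p^h\otimes_h H^{h,(r)}+p^h\otimes_h H^{h,(R)} $. The decisive observation in the discrete setting is that each factor of $H^h $ consumes at least one time step, so the iterated kernel $H^{h,(r)}(t_i,t_j,\cdot,\cdot) $ vanishes identically as soon as $r>j-i $. Choosing $R=j-i+1 $ annihilates the remainder and produces exactly the claimed expansion. The main (and really only) delicate point is the one-step telescoping identity above, which amounts to elementary manipulations of finite sums; no convergence or differentiation issue arises since the resulting series is finite, which is in fact the principal simplification afforded by the Markov chain setting over its continuous-time counterpart.
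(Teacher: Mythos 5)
Your derivation is correct and is precisely the ``direct algebraic manipulation'' the paper delegates to Lemma~3.6 of Konakov--Mammen \cite{kona:mamm:00}: the discrete telescoping of the bridge functional via Chapman--Kolmogorov, the identification of each increment with $h\,(L_{t_k}^h-\tilde L_{t_k}^{h,y})\tilde p^{h,y}(t_{k+1},t_j,\cdot,y)$, and the termination of the iteration because $H^{h,(r)}(t_i,t_j,\cdot,\cdot)$ vanishes once $r>j-i$. Since the paper gives no further argument of its own, your proof is essentially the same as the paper's (via the cited source), just spelled out.
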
 
\mysection{Stability of Parametrix Series.}
\label{SEC_STAB}

We will now investigate more specifically the sensitivity of the density w.r.t.  the coefficients through the difference of the series. For a given fixed parameter $\varepsilon $, under \textbf{(A)} the densities $p(s,t,x,.),p_\varepsilon(s,t,x,\cdot) $ at time $t$ of the processes in \eqref{1}, \eqref{2} starting from $x$ at time $s$ both admit a parametrix expansion of the previous type. 


\subsection{Stability for Diffusions: Proof of Theorem \ref{MTHM}}
\label{STAB_DIFF}
Let us consider the difference between two parametrix expansions:
\begin{align}
&|p(s,t,x,y)-p_\varepsilon(s,t,x,y)|  
=|\sum^{\infty}_{r=0} \tilde p\otimes H^{(r)}(s,t,x,y)-\sum^{\infty}_{r=0} \tilde p_\varepsilon\otimes H_\varepsilon^{(r)}(s,t,x,y)|  \notag \\
&\le | (\tilde p - \tilde p_\varepsilon)(s,t,x,y)| + |\sum^{\infty}_{r=1} \tilde p\otimes H^{(r)}(s,t,x,y)-\sum^{\infty}_{r=1} \tilde p_\varepsilon\otimes H_\varepsilon^{(r)}(s,t,x,y)|. \label{c}
\end{align}
The strategy to study the above difference, using some well known properties of the Gaussian kernels and their derivatives recalled in \eqref{5},  consists in  first studying the difference of the \textit{main} terms.


We have the following Lemma.

\begin{LEMME}[Difference of the first terms and their derivatives]
\label{LEM_MT}
Under \textbf{(A)}, there exist $c_1\ge 1,\ c\in (0,1]$ s.t. for all $0\le s<t,\  (x,y)\in (\R^d)^2 $ and all multi-index $\alpha,\ |\alpha|\le 4 $,  
$$|D_x^\alpha \tilde p(s, t,x,y)-D_x^\alpha\tilde p_\varepsilon( s,t,x,y)|\le \frac{c_1}{(t-s)^{|\alpha|/2}}  \Delta_{\varepsilon,\sigma,\gamma} p_{c}(t-s,y-x).$$ 
\end{LEMME}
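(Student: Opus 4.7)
The plan is to interpolate linearly between the two frozen covariances, reducing $\tilde p-\tilde p_\varepsilon$ to a one-parameter derivative that is estimated by standard Gaussian calculus. Since both $a$ and $a_\varepsilon$ satisfy the uniform ellipticity assumption \textbf{(A2)}, any convex combination $a_\lambda(u,y):=\lambda a(u,y)+(1-\lambda)a_\varepsilon(u,y)$, $\lambda\in[0,1]$, is again uniformly elliptic with the same constant $\Lambda$. I would set $\Sigma_\lambda(s,t,y):=\int_s^t a_\lambda(u,y)\,du$ and let $\tilde p_\lambda(s,t,x,y)$ denote the centered Gaussian density in $z:=y-x$ with covariance $\Sigma_\lambda(s,t,y)$, so that $\tilde p_1=\tilde p$, $\tilde p_0=\tilde p_\varepsilon$, and write
\[
D_x^\alpha(\tilde p-\tilde p_\varepsilon)(s,t,x,y)=\int_0^1 \partial_\lambda D_x^\alpha \tilde p_\lambda(s,t,x,y)\,d\lambda.
\]

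Using the standard identities $\partial_\lambda\log\det\Sigma_\lambda=\mathrm{Tr}(\Sigma_\lambda^{-1}\partial_\lambda\Sigma_\lambda)$ and $\partial_\lambda\Sigma_\lambda^{-1}=-\Sigma_\lambda^{-1}(\partial_\lambda\Sigma_\lambda)\Sigma_\lambda^{-1}$, a direct computation gives
\[
\partial_\lambda \tilde p_\lambda = \tilde p_\lambda\Big(-\tfrac12 \mathrm{Tr}(\Sigma_\lambda^{-1}\partial_\lambda\Sigma_\lambda)+\tfrac12\langle \Sigma_\lambda^{-1}(\partial_\lambda\Sigma_\lambda)\Sigma_\lambda^{-1}z,z\rangle\Big).
\]
The quantitative input is $\partial_\lambda\Sigma_\lambda=\int_s^t(a-a_\varepsilon)(u,y)\,du$, which I would bound via the identity $a-a_\varepsilon=(\sigma-\sigma_\varepsilon)\sigma^{*}+\sigma_\varepsilon(\sigma-\sigma_\varepsilon)^{*}$, the boundedness of $\sigma,\sigma_\varepsilon$, and the $L^\infty$ control $|\sigma-\sigma_\varepsilon|_\infty\le \Delta_{\varepsilon,\sigma,\gamma}$ contained in the H\"older norm \eqref{DEF_DELTA_EPS_SIG_GAMMA}: this yields $\|\partial_\lambda\Sigma_\lambda\|\le C\Delta_{\varepsilon,\sigma,\gamma}(t-s)$. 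Together with the elementary estimate $\|\Sigma_\lambda^{-1}\|\le \Lambda(t-s)^{-1}$ coming from uniform ellipticity, the bracket above is bounded by $C\Delta_{\varepsilon,\sigma,\gamma}(1+|z|^2/(t-s))$, and the standard polynomial-Gaussian absorption $(1+|z|^2/(t-s))^k\,\tilde p_\lambda(s,t,x,y)\le C_k\, p_c(t-s,z)$ (after a mild reduction of $c$) yields the announced bound for $|\alpha|=0$.

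For $1\le|\alpha|\le 4$ I would use the Hermite-type representation $D_x^\alpha \tilde p_\lambda=R_\alpha(\Sigma_\lambda^{-1};z)\,\tilde p_\lambda$, where $R_\alpha(\Sigma_\lambda^{-1};\cdot)$ is a polynomial of degree $|\alpha|$ in $z$ whose coefficients are polynomial in the entries of $\Sigma_\lambda^{-1}$; by \eqref{5} one already has $|D_x^\alpha \tilde p_\lambda|\le c_1(t-s)^{-|\alpha|/2}p_c(t-s,z)$ uniformly in $\lambda$. Applying the Leibniz rule to $\partial_\lambda(R_\alpha \tilde p_\lambda)$, each resulting term contains exactly one factor $\Sigma_\lambda^{-1}(\partial_\lambda\Sigma_\lambda)\Sigma_\lambda^{-1}$, of operator norm $\le C\Delta_{\varepsilon,\sigma,\gamma}/(t-s)$, multiplied by a polynomial in $z$ of degree at most $|\alpha|+2$ with coefficients that are again polynomial in the entries of $\Sigma_\lambda^{-1}$. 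The polynomial-Gaussian absorption then delivers
\[
|\partial_\lambda D_x^\alpha \tilde p_\lambda(s,t,x,y)|\le \frac{C\,\Delta_{\varepsilon,\sigma,\gamma}}{(t-s)^{|\alpha|/2}}\,p_c(t-s,y-x)
\]
uniformly in $\lambda\in[0,1]$, and integration over $\lambda$ gives the claim.

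The main technical obstacle is the bookkeeping in the Leibniz expansion: one must verify that the small factor $\Delta_{\varepsilon,\sigma,\gamma}$ is extracted exactly once and that the accumulated inverse powers of $(t-s)$ produced by $\Sigma_\lambda^{-1}$ are balanced by factors $|z|^k$ coming from the Hermite-type polynomials, so that the final control is indeed in $(t-s)^{-|\alpha|/2}$ and not worse. Beyond this, the argument is routine Gaussian calculus in the spirit of \eqref{5} and of the convolution estimates used in the proof of Proposition \ref{PROP_PARAM_D}.
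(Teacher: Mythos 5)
Your argument is correct, and it takes a genuinely different (though closely related) route from the paper. The paper views the Gaussian density $\tilde p^y$ as a function $f_{x,y}(\Theta)$ of the flattened covariance $\Theta\in\R^{d^2}$, performs a first-order multidimensional Taylor expansion with second-order integral remainder along the segment from $\Theta$ to $\Theta_\varepsilon$, and then uses the Cramér--Leadbetter identity to translate derivatives of $f_{x,y}$ with respect to the covariance entries into spatial derivatives, so that the heat-kernel bounds \eqref{5} can be applied; uniform ellipticity of convex combinations of $\Theta$ and $\Theta_\varepsilon$ enters exactly as it does for you. You instead parametrize the same segment by a scalar $\lambda$, apply the fundamental theorem of calculus in $\lambda$, and compute $\partial_\lambda\tilde p_\lambda$ directly by matrix calculus, then absorb the resulting polynomial factor into a Gaussian with slightly worse variance. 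What your approach buys is some mechanical simplicity: you never have to separate a first-order term from the remainder, nor invoke the Cramér--Leadbetter identity to recast covariance derivatives as spatial derivatives; the factor $\Delta_{\varepsilon,\sigma,\gamma}$ is extracted in a single transparent step from $\|\partial_\lambda\Sigma_\lambda\|\le C\Delta_{\varepsilon,\sigma,\gamma}(t-s)$. What the paper's approach buys is that the derivative bounds on $f_{x,y}$ land directly in the scale of the already-established estimate \eqref{5}, so no separate discussion of Hermite polynomials, Leibniz bookkeeping, or polynomial--Gaussian absorption is needed. The claimed bounds, the reliance on uniform ellipticity of the interpolated covariances, and the rate $(t-s)^{-|\alpha|/2}$ all match, so this is a valid alternative proof.
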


\begin{proof}
Let us first consider $|\alpha|=0 $ and introduce some notations. 
Set:
\begin{eqnarray}
\label{DEF_SIGMA_B_n}
\Sigma(s,t,y):=\int_s^t a(u,y)du,\ \Sigma_\varepsilon(s,t,y):=\int_s^t a_\varepsilon(u,y)du.
\end{eqnarray}
Let us now identify the columns of the matrices $\Sigma(s,t,y),\Sigma_\varepsilon(s,t,y) $ with $d$-dimensional column vectors, i.e. for $\Sigma(s,t,y) $:
\begin{eqnarray*}
\Sigma(s,t,y)=\left(\begin{array}{c|c|c|c}
\Sigma^1& \Sigma^2& \cdots  & \Sigma^d
\end{array}
 \right)(s,t,y).
\end{eqnarray*}
We now rewrite $\tilde p(s,t,x,y) $ and $\tilde p_\varepsilon(s,t,x,y)  $ in terms of vectors $\Theta(s,t,y), \Theta_\varepsilon (s,t,y)\in \R^{d^2}$:
\begin{eqnarray*}
\tilde p(s,t,x,y)=f_{x,y}(\Theta(s,t,y)), \ \Theta(s,t,y)=((\Sigma ^1)^*,\cdots,(\Sigma^d)^*)^*(s,t,y),\\
\tilde p_\varepsilon(s,t,x,y)=f_{x,y}(\Theta_\varepsilon (s,t,y)), \ \Theta_\varepsilon(s,t,y)=( (\Sigma_\varepsilon^1)^*,\cdots,(\Sigma_\varepsilon^d)^*)^*(s,t,y),
\end{eqnarray*}
where $(\cdot)^* $ stands for the transpose and
\begin{equation}
\begin{split}
f_{x,y}: \R^{d^2}& \rightarrow \R\\
  \Gamma&\mapsto  f_{x,y}(\Gamma)=\frac{1}{(2\pi)^{d/2}\det(\Gamma^{1:d})^{1/2}}
  \exp\left(-\frac 12\langle  (\Gamma^{1:d})^{-1}(y-x) ,y-x\rangle \right),  
\end{split} 
  \label{DEF_F}
\end{equation}
where $\Gamma:=\left(\begin{array}{c}\Gamma^1\\
\Gamma^2\\
\vdots\\
\Gamma^{d}
\end{array}
\right) $ and each  $(\Gamma^i)_{i\in\leftB 1,d\rightB} $  belongs to $\R^d  $. Also, we have denoted: 
$$\Gamma^{1:d}:=\left( \begin{array}{c|c|c|c}
\Gamma^1& \Gamma^2& \cdots  & \Gamma^{d}
\end{array}
 \right) ,$$
 the $d\times d$ matrix formed with the entries $(\Gamma_i)_{i\in \leftB 1,d\rightB} $, each entry being viewed as a column.

The multidimensional Taylor expansion now gives: 
\begin{equation}
\begin{split}|
(\tilde p-\tilde p_{\varepsilon})(s,t,x,y)|=|f_{x,y}(\Theta(s,t,y))-f_{x,y}(\Theta_{\varepsilon}(s,t,y))|
\\
=\bigg|\sum_{|\nu|=1}D_\nu f_{x,y}(\Theta(s,t,y))\{(\Theta_{\varepsilon}-\Theta)(s,t,y)\}^\nu
\\
+2\sum_{|\nu|=2}\frac{\{(\Theta_{\varepsilon}-\Theta)(s,t,y)\}^\nu}{\nu!}\int_0^1 (1-\delta)D^\nu f_{x,y}([\Theta+\delta(\Theta_{\varepsilon}-\Theta)](s,t,y)) d\delta\bigg|,
\\
\end{split}
\label{P_F}
\end{equation}
where for a multi-index $\nu:=(\nu_1,\cdots,\nu_{d^2})\in \N^{d^2}$, we denote by $|\nu|:=\sum_{i=1}^{d^2} \nu_i$ the length of the multi-index,
 $\nu!=\prod_{i=1}^{d^2}\nu_i! $ and for $h\in \R^{d^2}$, $h^\nu:=\prod_{i=1}^{d^2}h_i^{\nu_i} $ (with the convention that $0^0=1 $).
With these notations, 
from \eqref{DEF_SIGMA_B_n}, \eqref{DEF_F}, \eqref{P_F}  we get:
\begin{align}
|f_{x,y}(\Theta(s,t,y))-f_{x,y}(\Theta_{\varepsilon}(s,t,y))| 
&\le c\bigg\{ \sum_{|\nu|=1}^{}  |D^{\nu} f_{x,y}(\Theta(s,t,y))| \Delta_{\varepsilon,\sigma,\gamma}(t-s)  \nonumber \\
&  + \Delta_{\varepsilon,\sigma,\gamma}^2  (t-s)^2  \max_{\delta \in [0,1]} \sum_{|\nu|=2}  \left|D^\nu f_{x,y}([\Theta+\delta(\Theta_{\varepsilon}-\Theta)](s,t,y)) \right|\bigg\},\label{reterm}
\end{align}
recalling that $\Delta_{\varepsilon,\sigma,\gamma} $ has previously been defined in \eqref{DEF_DELTA_EPS_SIG_GAMMA}\footnote{Actually only the sup norm term in $\Delta_{\varepsilon,\sigma,\gamma} $ could be considered for this part of the anlalysis.}.

Since $f_{x,y} $ in \eqref{DEF_F} is a Gaussian density in the parameters $x,y$, 
we recall from Cramer and Leadbetter \cite{cram:lead:04} (see eq. (2.10.3) therein), that for all $\Gamma \in \R^{d^2} $ and any multi index $\nu,\ |\nu|\le 2 $:
 $$ D^{\nu} f_{x,y}(\Gamma)=\frac 1{2^{|\nu|}} \left( \prod_{i=1}^{d^2} (\frac{\partial^2 }{\partial_{x_{\lfloor \frac {i -1}d\rfloor+1}} \partial_{x_{i-\lfloor \frac {i-1} d\rfloor d}}   })^{\nu_i} f_{x,y}(\Gamma)\right),$$
 where $\lfloor \cdot\rfloor $ stands for the integer part. 
Hence, taking from \eqref{reterm}, for all $\delta \in [0,1] $, $\Gamma_{\varepsilon,\delta}(s,t,y):= [\Theta+\delta(\Theta_{\varepsilon}-\Theta)](s,t,y)$ yields, thanks to the non-degeneracy conditions (see equation \eqref{5}):  
\begin{equation}
\label{CTR_DER_DEUX}
|D^{\nu} f_{x,y}(\Gamma_{\varepsilon,\delta}(s,t,y))|\le \frac{\bar c_1}{(t-s)^{|\nu|}}f_{x,y}\big( \bar c\Gamma_{\varepsilon,\delta}(s,t,y)\big)
\le  \frac{\bar c_1}{(t-s)^{|\nu|}}p_{\bar c}(t-s,y-x),
\end{equation}
for some $\bar c_1\ge 1, \bar c\in (0,1] $.

Thus, from \eqref{DEF_F}, \eqref{P_F}, equations \eqref{reterm} and \eqref{CTR_DER_DEUX} give:
\begin{align*}
| \tilde p(s,t,x,y)- \tilde p_{\varepsilon}(s,t,x,y) | 
\le \bar c_1\Delta_{\varepsilon,\sigma,\gamma} p_{\bar c}(t-s,y-x). 
\end{align*}
Up to a modification of $\bar c_1,\bar c$ or $c_1,c $ in \eqref{5} we can assume that the statement of the lemma and \eqref{5} hold with the same constants $c_1,c $. 
The bounds for the derivatives are established similarly using the controls of \eqref{5}. 
This concludes the proof.
\end{proof}
\begin{REM}
Observe from equation \eqref{P_F} that the previous Lemma still holds with $\Delta_{\varepsilon,\sigma,\gamma} $ replaced by $\Delta_{\varepsilon,\sigma,\infty}:=\sup_{t\in [0,T]}|\sigma(t,.)-\sigma_\varepsilon(t,.)|_\infty $. The H\"older norm is required to control the differences of the parametrix kernels.
\end{REM}

The previous lemma quantifies how close are the main parts of the expansions. To proceed we need to 
consider the difference between the one-step convolutions. Combining the estimates of Lemmas \ref{LEM_MT} and \ref{ONE_STEP_C} below will yield by induction the result stated in Theorem \ref{MTHM}.

\begin{LEMME}[Control of the one-step convolution]
\label{ONE_STEP_C}
 For all $0\le s<t\le T,\ (x,y)\in (\R^d)^2 $  and for $q\in (d,+\infty]$:
\begin{equation}
\begin{split}
| \tilde p \otimes H^{(1)}(s,t,x,y)-\tilde p_{\varepsilon} \otimes H^{(1)}_{\varepsilon}(s,t,x,y) |  \\
\le 
 c_1^2 p_c(t-s,y-x)\Big\{ 2(1\vee T^{(1-\gamma)/2})^2 [\Delta_{\varepsilon,\sigma,\gamma}+\I_{q=+\infty}\Delta_{\varepsilon,b,+\infty}] B(1, \frac{\gamma}{2})(t-s)^{\frac{\gamma}{2}}
 \\+ \I_{q\in (d,+\infty)}\Delta_{\varepsilon,b,q}B(\frac12+\alpha(q),\alpha(q))(t-s)^{\alpha(q)}\Big\}
 \label{esti2},
 \end{split}
\end{equation}
where $c_1,c$ are as in Lemma \ref{LEM_MT} and for $q\in (d,+\infty] $ we set $\alpha(q)=\frac 12( 1- \frac dq) $. The above control then yields for a fixed $q\in (d,+\infty]$:
\begin{equation}
\label{THE_EQ_38}
\begin{split}
| \tilde p \otimes H^{(1)}(s,t,x,y)-\tilde p_{\varepsilon} \otimes H^{(1)}_{\varepsilon}(s,t,x,y) |  \\
\le 2 \bar C^2\Delta_{\varepsilon,\gamma,q}p_c(t-s,y-x) (t-s)^{\frac \gamma2\wedge \alpha( q)} (B(1,\frac \gamma 2)\vee B(\frac12+\alpha(q),\alpha(q))),\ \bar C=c_1(1\vee T^{(1-\gamma)/2}) ,
\end{split}
\end{equation}
which will be useful for the iteration (see Lemma \ref{LEMME_DIFF_ITE}).
\end{LEMME}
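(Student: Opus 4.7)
The plan is to decompose the difference of one-step convolutions as
\begin{equation*}
\tilde p\otimes H^{(1)}-\tilde p_\varepsilon\otimes H_\varepsilon^{(1)}=(\tilde p-\tilde p_\varepsilon)\otimes H+\tilde p_\varepsilon\otimes(H-H_\varepsilon),
\end{equation*}
and to estimate each contribution separately using the Gaussian derivative bounds \eqref{5}, the parametrix kernel control \eqref{8}, and the sensitivity estimate of Lemma \ref{LEM_MT}. For the first summand, \eqref{8} gives an integrable singularity of order $(t-u)^{-1+\gamma/2}$ for $H$, whereas Lemma \ref{LEM_MT} with $|\alpha|=0$ yields a Gaussian bound on $\tilde p-\tilde p_\varepsilon$ with prefactor $\Delta_{\varepsilon,\sigma,\gamma}$; the time-space convolution of these two Gaussian-like kernels collapses to a single $p_c(t-s,y-x)$ and the time integral produces the factor $B(1,\gamma/2)(t-s)^{\gamma/2}$.

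For the second summand I would split $H-H_\varepsilon$ into a diffusion part and a drift part. Writing $H(u,t,z,y)=\frac{1}{2}\Tr\bigl((a(u,z)-a(u,y))D_z^2\tilde p^y(u,t,z,y)\bigr)+\langle b(u,z),D_z\tilde p^y(u,t,z,y)\rangle$, the diffusion part of $H-H_\varepsilon$ naturally decomposes as
\begin{align*}
&\frac{1}{2}\Tr\bigl[((a-a_\varepsilon)(u,z)-(a-a_\varepsilon)(u,y))\,D_z^2\tilde p^y(u,t,z,y)\bigr]\\
&\quad+\frac{1}{2}\Tr\bigl[(a_\varepsilon(u,z)-a_\varepsilon(u,y))\,(D_z^2\tilde p^y-D_z^2\tilde p^y_\varepsilon)(u,t,z,y)\bigr].
\end{align*}
The first piece is controlled through the consequence $|(a-a_\varepsilon)(u,z)-(a-a_\varepsilon)(u,y)|\le 2(K_2+\kappa)\Delta_{\varepsilon,\sigma,\gamma}|z-y|^\gamma$ of \A{A3} stated just after \eqref{DEF_DELTA_EPS_SIG_GAMMA}, combined with the $|\alpha|=2$ case of \eqref{5}. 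The second piece uses the H\"older control on $a_\varepsilon$ from \A{A3} together with the $|\alpha|=2$ case of Lemma \ref{LEM_MT}. In both cases the $|z-y|^\gamma$ factor is absorbed into the Gaussian at the price of a slight deterioration of the constant $c$, reproducing the same integrable singularity $(t-u)^{-1+\gamma/2}$ and the same time integral $B(1,\gamma/2)(t-s)^{\gamma/2}$, weighted by $\Delta_{\varepsilon,\sigma,\gamma}$.

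The drift contribution of $H-H_\varepsilon$ splits as $\langle(b-b_\varepsilon)(u,z),D_z\tilde p^y\rangle+\langle b_\varepsilon(u,z),D_z\tilde p^y-D_z\tilde p^y_\varepsilon\rangle$. The second summand is handled via the boundedness of $b_\varepsilon$ from \A{A1} and the $|\alpha|=1$ case of Lemma \ref{LEM_MT}, again yielding a $\Delta_{\varepsilon,\sigma,\gamma}$-weighted contribution with the milder singularity $(t-u)^{-1/2}$. The first summand is the only one sensitive to $q$: for $q=+\infty$ one simply pulls out $\Delta_{\varepsilon,b,\infty}$ and recovers the $B(1,\gamma/2)(t-s)^{\gamma/2}$ term; for $q\in(d,+\infty)$ the plan is to apply H\"older's inequality in $z$ with conjugate exponent $q'=q/(q-1)$, after first reorganising the product $\tilde p_\varepsilon(s,u,x,z)|D_z\tilde p^y(u,t,z,y)|$ via the Chapman--Kolmogorov-type identity for Gaussians so as to factor out $p_{c/2}(t-s,y-x)$ and keep a single normalised Gaussian in $z$ with variance proportional to $(u-s)(t-u)/(t-s)$. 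The $L^{q'}$ norm of the latter scales as $\bigl((u-s)(t-u)/(t-s)\bigr)^{-d/(2q)}$; multiplying by the derivative singularity $(t-u)^{-1/2}$ and integrating in $u$ produces precisely $B(\frac{1}{2}+\alpha(q),\alpha(q))(t-s)^{\alpha(q)}$ with $\alpha(q)=\frac{1}{2}(1-d/q)$.

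Collecting the four contributions against a single $p_c(t-s,y-x)$ yields \eqref{esti2}, and bounding the two time-exponents by their minimum and the two beta functions by their maximum gives \eqref{THE_EQ_38}. The main obstacle will be the $L^q$ drift case: the H\"older argument combined with the Gaussian reorganisation is where the restriction $q>d$ enters and where the constant $C(q)$ blows up as $q\downarrow d$, since $\alpha(q)\downarrow 0$ forces $B(\frac{1}{2}+\alpha(q),\alpha(q))$ to diverge through its $\Gamma(\alpha(q))$ factor, in agreement with Remark \ref{RQ_CONST}.
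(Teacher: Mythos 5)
Your proof follows essentially the same approach as the paper's. You use the identical decomposition $\tilde p\otimes H-\tilde p_\varepsilon\otimes H_\varepsilon=(\tilde p-\tilde p_\varepsilon)\otimes H+\tilde p_\varepsilon\otimes(H-H_\varepsilon)$, the same four-term telescoping of $H-H_\varepsilon$ (your split places the $a$-increment difference and $\tilde p$-derivative difference in the symmetric positions to the paper's equation \eqref{THE_DECOMP_DIFF_NOY}, which is an equivalent bookkeeping), and H\"older's inequality in $z$ for the $L^q$ drift term; the only cosmetic difference is that you perform the Chapman--Kolmogorov factorisation of $p_c(u-s,z-x)p_c(t-u,y-z)$ before taking the $L^{q'}$ norm of the single residual Gaussian, whereas the paper applies H\"older first to the product and then uses the Gaussian power identity, both yielding the same exponents $(u-s)^{-d/(2q)}(t-u)^{-1/2-d/(2q)}$ and hence $B(\frac12+\alpha(q),\alpha(q))(t-s)^{\alpha(q)}$.
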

\begin{proof}
Let us write:
\begin{align}
&| \tilde p \otimes H^{(1)}(s,t,x,y)-\tilde p_{\varepsilon} \otimes H^{(1)}_{\varepsilon}(s,t,x,y) | \le 
|\tilde p- \tilde p_{\varepsilon}|\otimes |H| (s,t,x,y)  +\tilde p_{\varepsilon}\otimes | H- H_{\varepsilon}| (s,t,x,y)=:I+II .
\label{sum1}
\end{align}
From Lemma \ref{LEM_MT} and \eqref{8} we readily get for all $q\in (d,+\infty]$:
\begin{equation}
\label{CTR_I}|\tilde p- \tilde p_{\varepsilon}| \otimes |H|(s,t,x,y) |\le  ((1\vee T^{(1-\gamma)/2})c_1)^2 \Delta_{\varepsilon,\gamma,q} p_c(t-s,y-x)  B(1, \frac{\gamma}{2})(t-s)^{\frac{\gamma}{2}}.
\end{equation}
Now we will establish that for all $0\le u<t\le T, \ (z,y)\in (\R^d)^2 $ and $q=+\infty$:
\begin{equation}
\label{DIFF_H_HN}
|(H-H_{\varepsilon})(u,t,z,y)|\le \Delta_{\varepsilon,\gamma,\infty} \frac{(1\vee T^{(1-\gamma)/2})c_1}{(t-u)^{1-\frac \gamma 2}} p_c(t-u,y-z).
\end{equation}
Equations \eqref{DIFF_H_HN} and \eqref{5} give that $II$ can be handled as $I$ which yields the result for $q=+\infty $. It therefore remains to prove \eqref{DIFF_H_HN}.
Let us write with the notations of \eqref{DEF_F}:
\begin{eqnarray*}
(H-H_{\varepsilon})(u,t,z,y):=\bigg[\frac{1}2\Tr\bigg( (a(u,z)-a(u,y)) D_z^2 f_{z,y}\big(\Theta(u,t,y) \big)\bigg)
+\langle b(u,z), D_z f_{z,y}\big(\Theta(u,t,y) \big)\rangle \bigg]\\
-\bigg[\frac{1}2\Tr\bigg( (a_{\varepsilon}(u,z)-a_{\varepsilon}(u,y)) D_z^2 f_{z,y}\big(\Theta_{\varepsilon}(u,t,y) \big)\bigg)
+\langle b_{\varepsilon}(u,z), D_z f_{z,y}\big(\Theta_{\varepsilon}(u,t,y) \big)\rangle \bigg].
\end{eqnarray*}
Thus,
\begin{eqnarray}
(H-H_{\varepsilon})(u,t,z,y)
=\frac{1}2\bigg[\Tr\bigg( (a(u,z)-a(u,y)) \{ D_z^2 f_{z,y}\big(\Theta(u,t,y) \big)-D_z^2 f_{z,y}\big(\Theta_{\varepsilon}(u,t,y) \big)\}\bigg)\nonumber\\
-\Tr\bigg( [(a_{\varepsilon}(u,z)-a_{\varepsilon}(u,y)-(a(u,z)-a(u,y))] D_z^2 f_{z,y}\big(\Theta_{\varepsilon}(u,t,y) \big)\bigg)\bigg]\nonumber\\
+\bigg[\langle b(u,z), \{D_z f_{z,y}\big(\Theta(u,t,y) \big)-D_z f_{z,y}\big(\Theta_{\varepsilon}(u,t,y) \big) \}\rangle 
-\langle (b_{\varepsilon}(u,z)-b(u,z)) , D_z f_{z,y}\big(\Theta_{\varepsilon}(u,t,y) \big)\rangle
\bigg].\label{THE_DECOMP_DIFF_NOY}
\end{eqnarray}
Observe now that, similarly to \eqref{CTR_DER_DEUX} one has for all $ i\in \{1,2\}$:
\begin{eqnarray*}
 \ |D_z^i f_{z,y}\big(\Theta(u,t,y) \big)|+|D_z^i f_{z,y}\big(\Theta_{\varepsilon}(u,t,y) \big)|\le \frac{\tilde c_1 }{(t-u)^{i/2}}p_{\tilde c}(t-u,y-z),\\
|D_z^i f_{z,y}\big(\Theta(u,t,y) \big)-D_z^i f_{z,y}\big(\Theta_{\varepsilon}(u,t,y) \big)|\le \frac{\tilde c_1\Delta_{\varepsilon,\sigma,\gamma}}{(t-u)^{i/2}}p_{\tilde c}(t-u,y-z).
\end{eqnarray*}
Also, 
\begin{eqnarray*}
|(a_{\varepsilon}(u,z)-a_{\varepsilon}(u,y)-(a(u,z)-a(u,y))|\le c\Delta_{\varepsilon,\sigma,\gamma}|z-y|^{\gamma},\\
|b_\varepsilon(u,z)-b(u,z)|\le c\Delta_{\varepsilon,b,\infty}.
\end{eqnarray*}
Thus, provided that $c_1,c$ have been chosen large and small enough respectively in Lemma \ref{LEM_MT}, the definition in \eqref{DEF_D_EPS} gives:
\begin{eqnarray*}
|(H-H_{\varepsilon})(u,t,z,y)|\le \frac{(1\vee T^{(1-\gamma)/2})c_1\Delta_{\varepsilon,\gamma,\infty} }{(t-u)^{1-\gamma/2}}p_c(t-u,y-z).
\end{eqnarray*}
This establishes \eqref{DIFF_H_HN} for $q=+\infty $. For $q\in (d,+\infty)$ we have to use H\"older's inequality in the time-space convolution involving the difference of the drifts (last term in \eqref{THE_DECOMP_DIFF_NOY}). Set: 
$$D(s,t,x,y):=\int_s^t du \int_{\R^d}\tilde p_\varepsilon(s,u,x,z)  \langle (b_{\varepsilon}(u,z)-b(u,z)) , D_z f_{z,y}\big(\Theta_{\varepsilon}(u,t,y)\big)\rangle dz.
 $$
Denoting by $\bar q $ the conjugate of $q$, i.e. $q,\bar q>1, q^{-1}+{\bar q}^{-1}=1 $, we get from \eqref{5} and for $q>d$ that:
\begin{eqnarray*}
|D(s,t,x,y)|\le c_1^2 \int_s^t \frac{du}{(t-u)^{1/2}} \|b(u,.)-b_\varepsilon(u,.)\|_{L^q(\R^d)} \Big\{ \int_{\R^d}[p_c(u-s,z-x)  p_c(t-u,y-z)]^{\bar q} dz\Big\}^{1/\bar q} \\
\le c_1^2 \Delta_{\varepsilon,b,q}\int_s^t  \frac{c^d}{(2\pi)^{d(1-\frac{1}{\bar q})}(c\bar q)^{d/\bar q}}\Big\{ \int_{\R^d} p_{c\bar q}(u-s,z-x)p_{c\bar q}(t-u,y-z) dz \Big\}^{1/\bar q}\frac{du}{(u-s)^{\frac{d}{2}(1-\frac{1}{\bar q})}(t-u)^{\frac12+\frac{d}{2}(1-\frac{1}{\bar q})}}\\
\le c_1^2\left(\frac{c(t-s)}{2\pi}\right)^{\frac d2(1-\frac{1}{\bar q})}\bar q^{-\frac{d}{2 \bar q}}\Delta_{\varepsilon,b,q} p_c(t-s,y-x) \int_s^t \frac{du}{(u-s)^{\frac{d}{2}(1-\frac{1}{\bar q})}(t-u)^{\frac12+\frac{d}{2}(1-\frac{1}{\bar q})}}.
\end{eqnarray*}
Now, the constraint $d<q<+\infty$ precisely gives that $1<\bar q<d/(d-1)\Rightarrow \frac12+\frac d2(1-\frac{1}{\bar q})<1$ so that the last integral is well defined. We therefore derive:
\begin{eqnarray*}
|D(s,t,x,y)|\le c_1^2 (t-s)^{\frac12-\frac d2(1-\frac 1{\bar q})}\Delta_{\varepsilon,b,q}p_c(t-s,y-x)B(1-\frac d2(1-\frac{1}{\bar q}),\frac12-\frac d2(1-\frac{1}{\bar q})).
\end{eqnarray*}
In the case $d<q<+\infty $, recalling that $\alpha(q)=\frac 12(1-\frac dq) $, we eventually get :
\begin{equation}
\label{CTR_DIFF_FINAL_Q}
\begin{split}
 \tilde p_{\varepsilon}\otimes |H- H_{\varepsilon}| (s,t,x,y)\le  c_1^2p_c(t-s,y-x)\{ \Delta_{\varepsilon,b,q}(t-s)^{\alpha(q)}B(\frac12+\alpha ( q),\alpha ( q))\\
+ 2\Delta_{\varepsilon,\sigma,\gamma}(1\vee T^{(1-\gamma)/2})(t-s)^{\gamma/2}B(1,\gamma/2)\}.
\end{split}
\end{equation}

The statement now follows in whole generality from \eqref{sum1}, \eqref{CTR_I}, equations \eqref{DIFF_H_HN}, \eqref{5} for $q=\infty$ and \eqref{CTR_DIFF_FINAL_Q} for $d<q<+\infty $.
\end{proof}

The following Lemma associated with Lemmas \ref{LEM_MT} and \ref{ONE_STEP_C} allows to complete the proof of Theorem \ref{MTHM}.
\begin{LEMME}[Difference of the iterated kernels]
\label{LEMME_DIFF_ITE}
For all $0\le s<t\le T,\ (x,y)\in (\R^d)^2 $ and for all $q\in (d,+\infty]$, $r\in \N$: 
\begin{equation}
 |(\tilde p \otimes H^{(r)}-\tilde p_{\varepsilon} \otimes H_{\varepsilon}^{(r)})(s,t,x,y)| 
\le (r+1)
\Delta_{\varepsilon,\gamma,q} \frac{\bar C^{r+1}\left[ \Gamma (\frac{\gamma }{2}\wedge \alpha(q))\right] ^{r}}{\Gamma
(1+r(\frac{\gamma }{2}\wedge \alpha(q) ))}p_{c}(t-s,y-x) (t-s)^{r(\frac{ \gamma}{2} \wedge \alpha(q))}.
\label{ITE}
\end{equation}
\end{LEMME}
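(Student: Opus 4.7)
The plan is to proceed by induction on $r$. The cases $r=0$ and $r=1$ are precisely Lemma \ref{LEM_MT} and equation \eqref{THE_EQ_38} from Lemma \ref{ONE_STEP_C} respectively, which match \eqref{ITE} with coefficient $r+1$ and with exponent $\beta:=\frac{\gamma}{2}\wedge\alpha(q)$ in $(t-s)$. For the induction step, exploiting the associativity of the time-space convolution $\otimes$ together with the identity $H^{(r)}=H^{(r-1)}\otimes H$, one has
\begin{align*}
\tilde p\otimes H^{(r)}-\tilde p_{\varepsilon}\otimes H_{\varepsilon}^{(r)}
=\bigl(\tilde p\otimes H^{(r-1)}-\tilde p_{\varepsilon}\otimes H_{\varepsilon}^{(r-1)}\bigr)\otimes H+\bigl(\tilde p_{\varepsilon}\otimes H_{\varepsilon}^{(r-1)}\bigr)\otimes (H-H_{\varepsilon}),
\end{align*}
and I would apply the triangle inequality and treat the two contributions separately.

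The first term is handled by plugging in the induction hypothesis together with the uniformized bound $|H(u,t,z,y)|\le \bar C(t-u)^{\beta-1}p_c(t-u,y-z)$, derived from \eqref{8} by absorbing the multiplicative factor $T^{\gamma/2-\beta}$ into $\bar C$. The Gaussian convolution in space reproduces $p_c(t-s,y-x)$, while the time integral produces $B(1+(r-1)\beta,\beta)=\Gamma(1+(r-1)\beta)\Gamma(\beta)/\Gamma(1+r\beta)$; this factor telescopes cleanly with the $1/\Gamma(1+(r-1)\beta)$ coming from the hypothesis and yields the contribution $r\Delta_{\varepsilon,\gamma,q}\,\bar C^{r+1}[\Gamma(\beta)]^r/\Gamma(1+r\beta)\cdot p_c(t-s,y-x)(t-s)^{r\beta}$. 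For the second term, I would first restate the standard parametrix bound \eqref{10} with $\gamma/2$ replaced by $\beta$ (via the same $T$-absorption), giving $|\tilde p_{\varepsilon}\otimes H_{\varepsilon}^{(r-1)}|(s,u,x,z)\le \bar C^r [\Gamma(\beta)]^{r-1}/\Gamma(1+(r-1)\beta)\cdot p_c(u-s,z-x)(u-s)^{(r-1)\beta}$, and then convolve with $|H-H_{\varepsilon}|$. In the case $q=+\infty$ the pointwise control \eqref{DIFF_H_HN} permits an identical Gaussian-Beta computation. In the case $d<q<+\infty$, I would reproduce verbatim the H\"older inequality chain used in the proof of Lemma \ref{ONE_STEP_C} for the quantity $D(s,t,x,y)$, the only novelty being the insertion of the extra factor $(u-s)^{(r-1)\beta}$ in the $u$-integrand; this produces a shifted Beta function $B\bigl(\tfrac12+\alpha(q)+(r-1)\beta,\alpha(q)\bigr)$ which, thanks to the Stirling asymptotics and the identity $B=\Gamma\Gamma/\Gamma$, is comparable uniformly in $r$ (up to a $(\beta,q)$-dependent multiplicative constant absorbable into $\bar C$) to $B(1+(r-1)\beta,\beta)$. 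The second term is thus bounded by one copy of $\Delta_{\varepsilon,\gamma,q}\,\bar C^{r+1}[\Gamma(\beta)]^r/\Gamma(1+r\beta)\cdot p_c(t-s,y-x)(t-s)^{r\beta}$; summing with the $r$ copies coming from the first term yields the factor $r+1$ of \eqref{ITE}.

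The principal technical obstacle is the case $\beta=\alpha(q)<\gamma/2$, where the natural singularity of $|H|$ (and of the H\"older-continuity part of $|H-H_{\varepsilon}|$) is only $(t-u)^{\gamma/2-1}$, i.e.\ weaker than the $(t-u)^{\alpha(q)-1}$ dictated by the $L^q$-drift term. To combine all contributions under the single exponent $\beta$ in \eqref{ITE} one trades smoothness for a power of $T$ via $(t-u)^{\gamma/2-1}\le T^{\gamma/2-\beta}(t-u)^{\beta-1}$ on $(s,t)\subset(0,T]$, and this absorption must be performed with care so that the compounded constant grows at most like $\bar C^{r+1}$, not faster; this is indeed the case since only one such factor $T^{\gamma/2-\beta}$ enters per iteration. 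Beyond this book-keeping, the rest of the argument is a routine exercise in Gaussian convolutions and Beta-Gamma manipulations.
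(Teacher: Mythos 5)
Your argument is correct and follows exactly the paper's route: decompose $\tilde p\otimes H^{(r)}-\tilde p_\varepsilon\otimes H_\varepsilon^{(r)}$ into $(\tilde p\otimes H^{(r-1)}-\tilde p_\varepsilon\otimes H_\varepsilon^{(r-1)})\otimes H+(\tilde p_\varepsilon\otimes H_\varepsilon^{(r-1)})\otimes(H-H_\varepsilon)$, then close the induction by pairing the hypothesis with \eqref{8} for the first term and pairing \eqref{10} with \eqref{DIFF_H_HN}/\eqref{CTR_DIFF_FINAL_Q} for the second. You make explicit the book-keeping around the uniformized exponent $\beta=\frac{\gamma}{2}\wedge\alpha(q)$ (the $T$-absorption and the Beta-function comparison via Stirling), which the paper leaves implicit but which indeed works out as you describe.
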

where $c $ is as in Lemma \ref{LEM_MT} and $\bar C $ as in equation \eqref{THE_EQ_38} of Lemma \ref{ONE_STEP_C}.
\begin{proof}
Observe that Lemmas \ref{LEM_MT} and  \ref{ONE_STEP_C} respectively give \eqref{ITE} for $r=0 $ and $r=1$. Let us assume that it holds for a given $r\in \N^*$ and let us prove it for  $r+1$.

Let us denote for all $r\ge 1,\ \eta_r(s,t,x,y):= |(\tilde p \otimes H^{(r)}-\tilde p_{\varepsilon} \otimes H_{\varepsilon}^{(r)})(s,t,x,y)| $. Write
\begin{align*}
\eta_{r+1}(s,t,x,y) & \le | [\tilde p\otimes H^{(r)}- \tilde p_{\varepsilon}\otimes H_{\varepsilon}^{(r)} ]\otimes H (s,t,x,y)|+|\tilde p_{\varepsilon}\otimes H_{\varepsilon}^{(r)} \otimes (H-H_{\varepsilon})(s,t,x,y) |\\
&\le \eta_r \otimes |H| (s,t,x,y)+ |\tilde p_{\varepsilon}\otimes H_{\varepsilon}^{(r)}| \otimes |(H-H_{\varepsilon})|(s,t,x,y).
\end{align*}
Recall now that under \A{A}, the terms $|H|(s,t,x,y)$ and $|\tilde p_{\varepsilon}\otimes H_{\varepsilon}^{(r)}|$ satisfy respectively and uniformly in $\varepsilon$ the controls of equations
\eqref{8}, \eqref{10}. The result then follows from the proof of Lemma \ref{ONE_STEP_C} (see equation \eqref{DIFF_H_HN} for $q=\infty $ and \eqref{CTR_DIFF_FINAL_Q} for $q\in (d,+\infty) $) and the induction hypothesis.
\end{proof}

Theorem \ref{MTHM} now simply follows from the controls of Lemma \ref{LEMME_DIFF_ITE}, the parametrix expansions \eqref{3} and \eqref{4} of the densities $p,p_{\varepsilon} $ and the asymptotics of the gamma function.

\subsection{Stability for Markov Chains. }
In this Section we prove Theorem \ref{MTHM_M}. 
The strategy is rather similar to the one of Section \ref{STAB_DIFF} thanks to the series representation of the densities of the chains given in Proposition \ref{PROP_PARAM_C}.

Recall first from Section \ref{PARAM_MC} that we have the following representations for the density $p^h,p_{\varepsilon}^{h} $ of the Markov chains $Y,Y^{(\varepsilon)} $ in \eqref{DYN_M_P}. For all $0\le t_i<t_j\le T, \ (x,y)\in (\R^d)^2 $:
\begin{equation*}
\begin{split}
p^h(t_i,t_j,x,y)=\sum_{r=0}^{j-i} \tilde p^h\otimes_h H^{h,(r)}(t_i,t_j,x,y),\\ 
p_{\varepsilon}^h(t_i,t_j,x,y)=\sum_{r=0}^{j-i} \tilde p_{\varepsilon}^h\otimes_h H_{\varepsilon}^{h,(r)}(t_i,t_j,x,y).
\end{split}
\end{equation*}

\subsubsection{Comparison of the frozen densities}

The first key point for the analysis with Markov chains is the following Lemma.
\begin{LEMME}[Controls and Comparison of the densities and their derivatives]
\label{LEMME_COMP_MC}
There exist $c,c_1$ s.t. for all $0\le t_i<t_j\le T,\ (x,y)\in (\R^d)^2 $ and for all multi-index $ \alpha,\ |\alpha|\le 4$:
\begin{eqnarray*}
|D_x^\alpha \tilde p^h(t_i,t_j,x,y)|+|D_x^\alpha \tilde p_{\varepsilon}^h(t_i,t_j,x,y)|
\le  \frac{1}{(t_j-t_i)^{|\alpha|/2}}\psi_{c,c_1}(t_j-t_i,y-x),\\
|D_x^\alpha \tilde p^h(t_i,t_j,x,y)-D_x^\alpha \tilde p_{\varepsilon}^h(t_i,t_j,x,y)|
\le \frac{\Delta_{\varepsilon,\sigma,\gamma}}{(t_j-t_i)^{|\alpha|/2}} \psi_{c,c_1}(t_j-t_i,y-x),
\end{eqnarray*}
where 
\begin{trivlist}
\item[-] Under \A{I${}_G$}:
\begin{equation*}
\psi_{c,c_1}(t_j-t_i,y-x):=c_1 p_c(t_j-t_i,y-x),
\end{equation*}
\item[-] Under \A{I${}_{P,M}$}:
\begin{equation*}
\psi_{c,c_1}(t_j-t_i,y-x):=\frac{c_1 c^{d}
}{(t_j-t_i)^{d/2}} Q_{M-d-5}\left(\frac{|y-x|}{(t_j-t_i)^{1/2}/c}\right).
\end{equation*}
\end{trivlist}
\end{LEMME}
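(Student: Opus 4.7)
The plan is to handle the two regimes of innovations separately, since under \A{I${}_G$} the frozen chain has a closed-form Gaussian density, while under \A{I${}_{P,M}$} one must resort to a local limit theorem.

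Under \A{I${}_G$}, the random variable $\tilde Y_{t_j}^{t_i,x,y}-x$ is exactly centered Gaussian with covariance
$$\Sigma^h(t_i,t_j,y):=h\sum_{k=i}^{j-1}a(t_k,y),$$
and analogously $\Sigma_\varepsilon^h$ for the perturbed chain. Assumption \A{A2} yields $\Lambda^{-1}(t_j-t_i)I_d\le \Sigma^h(t_i,t_j,y)\le \Lambda(t_j-t_i)I_d$ uniformly in $h$, so $\tilde p^h,\tilde p_\varepsilon^h$ and their space derivatives up to order $4$ satisfy Gaussian bounds analogous to \eqref{5}, yielding $\psi_{c,c_1}=c_1 p_c(t_j-t_i,y-x)$. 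The comparison is then a direct discrete transcription of the argument for Lemma \ref{LEM_MT}: write $\tilde p^h(t_i,t_j,x,y)=f_{x,y}(\Theta^h(t_i,t_j,y))$ with $\Theta^h$ the vectorization of $\Sigma^h$ (and similarly $\Theta_\varepsilon^h$), perform a Taylor expansion of $f_{x,y}$ in this parameter, and exploit
$$|\Theta^h(t_i,t_j,y) - \Theta_\varepsilon^h(t_i,t_j,y)|\le h\sum_{k=i}^{j-1}|a(t_k,y)-a_\varepsilon(t_k,y)|\le C(t_j-t_i)\Delta_{\varepsilon,\sigma,\gamma},$$
together with Gaussian bounds on the derivatives of $f_{x,y}$ at any convex combination of $\Theta^h,\Theta_\varepsilon^h$ (which remain uniformly elliptic by \A{A2}). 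Space differentiation of $f_{x,y}$ supplies the $(t_j-t_i)^{-|\alpha|/2}$ factor.

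Under \A{I${}_{P,M}$}, the frozen chain is no longer exactly Gaussian and I would invoke a local limit theorem through Fourier inversion. Writing
$$\tilde p^h(t_i,t_j,x,y)=\frac{1}{(2\pi)^d}\int_{\R^d} e^{-\mathrm{i}\langle \zeta, y-x\rangle}\prod_{k=i}^{j-1}\hat f_\xi\bigl(h^{1/2}\sigma^*(t_k,y)\zeta\bigr)\,d\zeta,$$
the $C^5$ regularity and polynomial decay \eqref{CONC_INNO} of $f_\xi$ enable repeated integration by parts against the oscillatory factor, producing polynomial decay of order $M-(d+5)$ in $|y-x|/(t_j-t_i)^{1/2}$, while uniform ellipticity \A{A2} guarantees integrability of the characteristic function. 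This gives the announced kernel $\psi_{c,c_1}$ of the $Q_{M-d-5}$ form, and spatial derivatives follow from differentiating inside the Fourier integral at the sole cost of $(t_j-t_i)^{-|\alpha|/2}$. For the difference, I would combine the Fourier representation with the elementary telescoping identity
$$\Bigl|\prod_{k}a_k-\prod_{k}b_k\Bigr|\le \sum_{k}|a_k-b_k|\prod_{\ell\ne k}(|a_\ell|\vee|b_\ell|),$$
applied to the factors $\hat f_\xi(h^{1/2}\sigma^*(t_k,y)\zeta)$ versus $\hat f_\xi(h^{1/2}\sigma_\varepsilon^*(t_k,y)\zeta)$; the mean value theorem on $\hat f_\xi$ together with $|\sigma(t_k,y)-\sigma_\varepsilon(t_k,y)|\le \Delta_{\varepsilon,\sigma,\gamma}$ produces the extra factor $\Delta_{\varepsilon,\sigma,\gamma}$ while preserving the decay.

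The main obstacle is the bookkeeping of polynomial decay in case \A{I${}_{P,M}$}: one must verify that the integrations by parts (which generate the polynomial decay), the spatial differentiations (which cost the $(t_j-t_i)^{-|\alpha|/2}$ factor), and the telescoping comparison each leave the final decay of order $M-d-5$ intact. This is precisely what the margin $M>2d+5+\gamma$ built into \A{I${}_{P,M}$} accommodates. The cleanest implementation is to adapt the local limit theorem estimates already established in the Konakov--Mammen framework \cite{kona:mamm:00}, where such controls appear as intermediate technical steps.
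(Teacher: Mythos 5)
Under $\mathbf{(I_G)}$ your argument matches the paper: the frozen chain is exactly Gaussian with covariance $h\sum_{k=i}^{j-1}a(t_k,y)$, which is uniformly comparable to $(t_j-t_i)I_d$ by \A{A2}, and the comparison follows the same Taylor-expansion-in-the-covariance scheme as Lemma~\ref{LEM_MT}.

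Under $\mathbf{(I_{P,M})}$, however, there is a genuine gap, and it is precisely in the step you flag as an ``obstacle'' and then dismiss by citing \cite{kona:mamm:00}. Your Fourier inversion representation for $\tilde p^h$ and the appeal to Bhattacharya--Rao type estimates for the \emph{bounds} on $\tilde p^h$, $\tilde p_\varepsilon^h$ and their derivatives are fine; the paper indeed invokes the Edgeworth expansion (Theorem~19.3 of \cite{bhat:rao:76}). The problem is the \emph{difference} estimate via per-innovation telescoping. Two issues. First, the plain mean-value theorem gives $|\hat f_\xi(h^{1/2}\sigma^*\zeta)-\hat f_\xi(h^{1/2}\sigma_\varepsilon^*\zeta)|\lesssim h^{1/2}|\zeta|\,\Delta_{\varepsilon,\sigma,\gamma}$ for each of the $j-i$ factors. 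After integrating against the product of the remaining factors, which decays like $\exp(-c(t_j-t_i)|\zeta|^2)$, a single term contributes a factor $\sim h^{1/2}/(t_j-t_i)^{1/2}\Delta_{\varepsilon,\sigma,\gamma}$ relative to the main density estimate, and summing over $j-i\sim(t_j-t_i)/h$ terms gives $(t_j-t_i)^{1/2}h^{-1/2}\Delta_{\varepsilon,\sigma,\gamma}$, which diverges as $h\to0$. To avoid this you must exploit the centering of $\xi$ ($\nabla\hat f_\xi(0)=0$), upgrading the one-factor bound to $h|\zeta|^2\Delta_{\varepsilon,\sigma,\gamma}$, which is not visible in your write-up. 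Second, and more structurally, the stated polynomial decay requires controlling $\int|D_\zeta^\beta[\zeta^\alpha(\hat q_{j-i}-\hat q_{j-i,\varepsilon})(\zeta)]|\,d\zeta$; the Leibniz rule for $D_\zeta^\beta$ of a product of $j-i$ factors, combined with the telescoping, produces on the order of $(j-i)^{|\beta|+1}$ terms whose cancellations must be tracked by hand, and you give no mechanism for doing so.

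The paper sidesteps both difficulties with a different decomposition. It interprets $D_\zeta^\beta\hat q_{j-i}(\zeta)$ directly as a moment of the normalized sum $\tilde Z_{i,j}^y$ (not via Leibniz on the product), splits that sum exactly once into two independent halves $\tilde Z_{i,j,1}^y+\tilde Z_{i,j,2}^y$, factors the characteristic function accordingly, and applies an ``ABBA'' decomposition to the resulting product of two factors (see \eqref{DECOMP_DIFF_DENS_CDM}). The terms ${\mathcal E}_{1,l},{\mathcal E}_{2,\bar\beta-l}$ carrying $\Delta_{\varepsilon,\sigma,\gamma}$ are then moment differences of the half-sums, which are bounded uniformly in $h$ and $j-i$ via H\"older and Burkholder--Davies--Gundy for the discrete martingale $\tilde M_i=\sqrt h\sum(\sigma-\sigma_\varepsilon)(t_r,y)\xi_{r+1}$; the terms $\Psi_1,\Psi_2$ carrying integrability come from \eqref{INT_F_CAR}. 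This ``one split, moment bounds'' strategy is the crux, and your proposal does not identify it.
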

\begin{proof}
Note first that under \A{I${}_G$} the statement has already been proved in Lemma \ref{LEM_MT}. We thus assume that \A{I${}_{P,M}$} holds. 
Introduce first the random vectors with zero mean:
\begin{eqnarray*}
\tilde Z_{k,j}^{y}:=\frac{1}{(t_j-t_k)^{1/2}}\sum_{l=k}^{j-1} 
\sigma(t_l,y)\sqrt h \xi_{l+1},\ 
\tilde Z_{k,j}^{y,(\varepsilon)}:=\frac{1}{(t_j-t_k)^{1/2}}\sum_{l=k}^{j-1} 
\sigma_{\varepsilon}(t_l,y)\sqrt h \xi_{l+1}.
\end{eqnarray*}
Denoting by $q_{j-k},q_{j-k,\varepsilon} $ their respective densities, one  has:
\begin{eqnarray}
D_x^\alpha \tilde p^h(t_k,t_j,x,y)&=&\frac{1}{(t_j-t_k)^{(d+|\alpha|)/2}}(-1)^{|\alpha|}D_z^\alpha q_{j-k}(z)|_{z=\frac{y-x }{(t_j-t_k)^{1/2}}},\notag\\
D_x^\alpha \tilde p_{\varepsilon}^h(t_k,t_j,x,y)&=&\frac{1}{(t_j-t_k)^{(d+|\alpha|)/2}}(-1)^{|\alpha|}D_z^\alpha q_{j-k,\varepsilon}(z)|_{z=\frac{y-x }{(t_j-t_k)^{1/2}}}.\notag\\\label{REL}
\end{eqnarray}
From the Edgeworth expansion of Theorem 19.3 in Bhattacharya and Rao \cite{bhat:rao:76}, for $ q_{j-k},q_{j-k,\varepsilon}$, one readily derives under \A{A}, for $|\alpha|=0 $ that there exists $c_1$ s.t. for all $0\le t_k <t_j\le T,\ (x,y)\in (\R^d)^2 $,
\begin{equation}
\label{CTR_DEN_CDM}
\tilde p^h(t_k,t_j,x,y)+\tilde p_{\varepsilon}^h(t_k,t_j,x,y)\le \frac{c_1}{(t_j-t_k)^{d/2}}\frac{1}{\left(1+ \frac{|x-y|}{(t_j-t_k)^{1/2}}\right)^{m}},
\end{equation}
for all integer $m<M-d $, where we recall that $M $ stands for the initial decay of the density $f_\xi$ of the innovations bounded by $Q_M$ (see equation \eqref{CONC_INNO}).

We can as well derive similarly to the proof of Theorem 19.3 in \cite{bhat:rao:76}, see also Lemma 3.7 in \cite{kona:mamm:00}, that for all $\alpha, |\alpha|\le 4 $:
\begin{equation}
|D_x^\alpha\tilde p^h(t_k,t_j,x,y)|+|D_x^\alpha \tilde p_{\varepsilon}^h(t_k,t_j,x,y)|\le \frac{c_1}{(t_j-t_k)^{(d+|\alpha|)/2}}\frac{1}{\left(1+ \frac{|x-y|}{(t_j-t_k)^{1/2}}\right)^{m}},
\label{CTR_DEN_DER_CDM}
\end{equation}
for all $m<M-d-4$.
Note indeed that differentiating in $D_x^\alpha $ the density and the terms of the Edgeworth expansion corresponds to a multiplication of the Fourier transforms involved by $\zeta^\alpha $, $\zeta $ standing for the Fourier variable. Hence, from our smoothness assumptions in \A{I${}_{P,M}$}, after obvious modifications, the estimates of Theorem 9.11 and Lemma 14.3 from \cite{bhat:rao:76} apply for these derivatives. With these bounds, one then simply has to copy the proof of Theorem 19.3. Roughly speaking, taking derivatives deteriorates the concentration of the initial control in \eqref{CTR_DEN_CDM} up to the derivation order. 
On the other hand, the bound in \eqref{CTR_DEN_CDM} is itself deteriorated w.r.t. the initial concentration condition in \eqref{CONC_INNO}. The key point is that the techniques of Theorem 19.3 in \cite{bhat:rao:76} actually provide concentration bounds for inhomogeneous sums of random variables with concentration as in \eqref{CONC_INNO} in terms of the moments of the innovations. To explain the bound in \eqref{CTR_DEN_CDM} let us observe that the $m^{{\rm th}} $ moment of $\xi $ is finite for $m<M-d$.

Equations \eqref{CTR_DEN_CDM} and \eqref{CTR_DEN_DER_CDM} give the first part of the lemma. Still from the proof of Theorem 19.3 in \cite{bhat:rao:76}, one gets, under \A{A}, that there exists $C>0$ s.t. for all multi-indexes $\bar \alpha,\ |\bar \alpha|\le 4, \bar \beta,\ |\bar \beta|\le m\le M-d-5$ for all $j>k $:
\begin{equation}
\label{INT_F_CAR}
\int_{\R^d} |\zeta^{\bar \alpha} | \Big\{ |D_\zeta^{\bar \beta} \hat q_{j-k}(\zeta)|+|D_\zeta^{\bar \beta} \hat q_{j-k,\varepsilon}(\zeta)|\Big\} d\zeta\le C,
\end{equation}
where $\hat q_{j-k}(\zeta), \hat q_{j-k,\varepsilon}(\zeta)$ stand for the respective characteristic functions of the random variables $\tilde Z_{k,j}^{y},\ \tilde Z_{k,j}^{y,(\varepsilon)}$ at point $\zeta $.

To investigate the quantity $|D_x^{\alpha} \tilde p^h(t_k,t_j,x,y) - D_x^{\alpha} \tilde p^h_{\varepsilon}(t_k,t_j,x,y)|$ thanks to \eqref{REL} define now for all $\alpha, \ |\alpha|\le 4, \beta,\ |\beta|\le m \le M-d-5 $:
\begin{align}
\forall z\in \R^d,\ \Theta_{j-k,\varepsilon}(z):= z^{\beta} D^{\alpha}_z \left(q_{j-k}(z)-q_{j-k,\varepsilon}(z) 
 \right), \notag\\
\forall \zeta\in \R^d,\ \widehat \Theta_{j-k,\varepsilon}(\zeta):=(-i)^{|\alpha|+|\beta|} D_\zeta^{\beta}  \left( \zeta^\alpha  \left\{ \hat q_{j-k}(\zeta)-\hat q_{j-k,\varepsilon}(\zeta) \right\}
\right).\label{FOURIER_T}
\end{align}

Let us now estimate the difference between the characteristic functions. From the Leibniz formula, we are led to investigate for all multi-indexes $\bar \beta, \bar \alpha,\ |\bar \beta|\le |\beta|$, $|\bar \alpha| \le |\alpha|$ quantities of the form:
\begin{align}
& (i^{\bar \beta})^{-1}\zeta^{\bar \alpha  }(D_\zeta^{\bar \beta} \hat q_{j-k}(\zeta)- 
D_\zeta^{\bar \beta} \hat q_{j-k,\varepsilon}(\zeta) ) \notag\\
&=  \zeta^{\bar \alpha  }  \E \left[ ( \tilde Z _{k,j}^{y} )^{\bar \beta} \exp [i \zeta \cdot \tilde Z _{k,j}^{y}] - ( \tilde Z _{k,j}^{y,(\varepsilon)} )^{\bar \beta} \exp [i \zeta\cdot \tilde Z _{k,j}^{y, (\varepsilon)}] \right] 
\notag.
\end{align}
Assume first that $j>{k+1} $. In that case,
set now $\tilde Z _{k,j,1}^{y}:=\tilde Z_{k,\lceil (j+k)/2\rceil }^{y},\tilde Z_{k,j,2}^{y}:=\tilde Z_{k,j}^{y} -\tilde Z _{k,j,1}^{y} $. Denoting similarly $\tilde Z _{k,j,1}^{y,(\varepsilon)}:=\tilde Z_{k,\lceil (j+k)/2\rceil }^{y,(\varepsilon)},\tilde Z_{k,j,2}^{y,(\varepsilon)}:=\tilde Z_{k,j}^{y,(\varepsilon)} -\tilde Z _{k,j,1}^{y,(\varepsilon)} $ for the perturbed process, we get:
\begin{align}
& (i^{\bar \beta})^{-1}\zeta^{\bar \alpha  }(D_\zeta^{\bar \beta} \hat q_{j-k}(\zeta)- 
D_\zeta^{\bar \beta} \hat q_{j-k,\varepsilon}(\zeta) )=  \notag \\
& \zeta^{\bar \alpha }\Bigg\{\E \left[ ( \tilde Z _{k,j,1}^{y}  +  \tilde Z _{k,j,2}^{y})^{\bar \beta} \exp [i \zeta \cdot \tilde Z _{k,j,1}^{y}]\exp [i \zeta \cdot \tilde Z _{k,j,2}^{y}] \right] - 
\E \left[ ( \tilde Z _{k,j,1}^{y,(\varepsilon)}  +  \tilde Z _{k,j,2}^{y,(\varepsilon)})^{\bar \beta} \exp [i \zeta \cdot \tilde Z _{k,j,1}^{y,(\varepsilon)}]\exp [i \zeta \cdot \tilde  Z _{k,j,2}^{y,(\varepsilon)}] \right]\Bigg\}=  \notag  \\
& \zeta^{\bar \alpha}\Bigg\{ \sum_{l, |l|\le |\bar \beta|}^{} C_{\bar \beta}^l \E \Big[ (\tilde Z _{k,j,1}^{y} )^l  \exp [i \zeta\cdot \tilde Z _{k,j,1}^{y}]\Big]
\E \Big[ (\tilde Z _{k,j,2}^{y} )^{\bar \beta - l}  \exp [i \zeta \cdot \tilde Z _{k,j,2}^{y}] \Big]- \notag  \\
&  \sum_{l, |l|\le |\bar \beta|}^{} C_{\bar \beta}^l \E \Big[ (\tilde Z _{k,j,1}^{y,(\varepsilon)} )^l  \exp [i \zeta \cdot \tilde Z _{k,j,1}^{y,(\varepsilon)}]\Big]
\E\Big[  (\tilde Z _{k,j,2}^{y,(\varepsilon)} )^{\bar \beta - l}  \exp [i \zeta \cdot \tilde Z _{k,j,2}^{y,(\varepsilon)}]\Big] \Bigg\}= \notag  \\
&  \zeta^{\bar \alpha }\Bigg\{ \sum_{l, |l|\le |\bar \beta|}^{} C_{\bar \beta}^l \Bigl\{ \left[ \E  \Big[(\tilde Z _{k,j,1}^{y} )^l  \exp [i \zeta\cdot \tilde Z _{k,j,1}^{y}]\Big] -  \E\Big[  (\tilde Z _{k,j,1}^{y,(\varepsilon)} )^l  \exp [i \zeta \cdot \tilde Z _{k,j,1}^{y,(\varepsilon)}] \Big] \right]
\E  \Big[(\tilde Z _{k,j,2}^{y} )^{\bar \beta - l}  \exp [i \zeta\cdot \tilde Z _{k,j,2}^{y}]\Big] + \notag  \\ 
&
 \E \Big[ (\tilde Z _{k,j,1}^{y,(\varepsilon)} )^l  \exp [i \zeta \cdot \tilde Z _{k,j,1}^{y,(\varepsilon)}]  \Big]
\Big[ \E\Big[  (\tilde Z _{k,j,2}^{y} )^{\bar \beta - l}  \exp [i \zeta \cdot\tilde Z _{k,j,2}^{y}]\Big] -\E  \Big[(\tilde Z _{k,j,2}^{y,(\varepsilon)} )^{\bar \beta - l}  \exp [i \zeta \cdot \tilde Z _{k,j,2}^{y,(\varepsilon)} \Big]\Big]
\Bigr\} \Bigg\},\notag
\end{align}
where in the above expression we considered the binomial expansion for multi-indexes denoting by $C_{\bar \beta}^l:=\frac{\bar \beta !}{(\bar \beta-l)!l!} $ with the corresponding definitions for factorials (see the proof of Lemma \ref{LEM_MT}).
Introduce now, for a multi-index $ l, |l|\in \leftB 0,|\bar \beta|\rightB$, the functions:
\begin{eqnarray*}
\Psi_1^{\bar \alpha,\bar \beta-l}(\zeta)&:=&\zeta^{\bar \alpha}\E \Big[(\tilde Z _{k,j,2}^{y} )^{\bar \beta - l}  \exp [i \zeta\cdot \tilde Z _{k,j,2}^{y}] \Big],\ 
\Psi_2^{\bar \alpha,l}(\zeta):=\zeta^{\bar \alpha}\E \Big[ (\tilde Z _{k,j,1}^{y,(\varepsilon)} )^l  \exp [i \zeta \cdot \tilde Z _{k,j,1}^{y,(\varepsilon)}] \Big],
\end{eqnarray*}
and 
 \begin{eqnarray*}
  {\mathcal E}_{1,l}(\zeta):=\left[ \E \Big[ (\tilde Z _{k,j,1}^{y} )^l  \exp [i\zeta \cdot \tilde Z _{k,j,1}^{y}]\Big] -  \E\Big[  (\tilde Z _{k,j,1}^{y,(\varepsilon)} )^l  \exp [i \zeta \cdot \tilde Z _{k,j,1}^{y,(\varepsilon)}]\Big] \right],\\ 
  {\mathcal E}_{2,\bar \beta-l}(\zeta):=   \left[ \E  \Big[(\tilde Z _{k,j,2}^{y} )^{\bar \beta - l}  \exp [i \zeta \cdot \tilde Z _{k,j,2}^{y}]\Big] -\E\Big[  (\tilde Z _{k,j,2}^{y,(\varepsilon)} )^{\bar \beta - l}  \exp [i \zeta \cdot \tilde Z _{k,j,2}^{y,(\varepsilon)}]\Big] \right].
 \end{eqnarray*}   
Thus, we can rewrite from the previous computations:
\begin{equation}
\label{DECOMP_DIFF_DENS_CDM}
 (i^{\bar \beta})^{-1}\zeta^{\bar \alpha  }(D_\zeta^{\bar \beta} \hat q_{j-k}(\zeta)- 
D_\zeta^{\bar \beta} \hat q_{j-k,\varepsilon}(\zeta) )=
\sum_{l, |l|\le |\bar \beta|}^{} C_{\bar \beta}^l \left\{({\mathcal E}_{1,l}\Psi_1^{\bar \alpha,\bar \beta -l})(\zeta)+({\mathcal E}_{2,\bar \beta-l}\Psi_2^{\bar \alpha,l})(\zeta)\right\}.
\end{equation}
Recall from \eqref{INT_F_CAR} that we already have 
integrability for the contributions $\Psi_1^{\bar \alpha,\bar \beta-l}(\zeta)$
and 
$\Psi_2^{\bar \alpha, l}(\zeta)$. Let us thus proceed with the control of ${\mathcal E}_{1,l}(\zeta), {\mathcal E}_{2,\bar \beta-l}(\zeta) $. We only give details for $  {\mathcal E}_{1,l}(\zeta)$,
the contribution $  {\mathcal E}_{2,\bar \beta- l} $ can be handled similarly. We also consider $|l|\ge 2 $, since the cases $|l|< 2 $ can be handled more directly. Write:
\begin{eqnarray*}
|{\mathcal E}_{1,l}(\zeta)|\le \\
 \E[|(\tilde Z _{k,j,1}^{y} )^l-(\tilde Z _{k,j,1}^{y,(\varepsilon)} )^l|]+\E[|(\tilde Z _{k,j,1}^{y,(\varepsilon)} )^l||\exp(i \zeta \cdot \tilde Z _{k,j,1}^{y})-\exp(i\zeta \cdot \tilde Z _{k,j,1}^{y,(\varepsilon)})|]\\
\le C\Big\{\E[|\tilde Z _{k,j,1}^{y}-\tilde Z _{k,j,1}^{y,(\varepsilon)}| (|\tilde Z _{k,j,1}^{y}|^{|l|-1} +|\tilde Z _{k,j,1}^{y,(\varepsilon)}|^{|l|-1})] 
+\E[|\tilde Z _{k,j,1}^{y,(\varepsilon)}|^{|l|}|\zeta| |\tilde Z _{k,j,1}^{y}-\tilde Z _{k,j,1}^{y,(\varepsilon)}|]\Big\}.
\end{eqnarray*}
Apply now H\"older's inequality with $p_1=|l|,q_1=|l|/(|l|-1)$ for the first term and $p_2=(|l|+1)/|l|,\ q_2=|l|+1$ for the second one so that all the contribution appear with the same power (in order to equilibrate the constraints concerning the intregrability conditions). One gets:
\begin{eqnarray}
|{\mathcal E}_{1,l}(\zeta)|\le \notag\\
C\Big\{\E[|\tilde Z _{k,j,1}^{y}-\tilde Z _{k,j,1}^{y,(\varepsilon)}|^{|l|}]^{1/|l|}\{\E[|\tilde Z _{k,j,1}^{y}|^{|l|}]^{(|l|-1)/|l|}+\E[|\tilde Z _{k,j,1}^{y,(\varepsilon)}|^{|l|}]^{(|l|-1)/|l|}\}+\notag\\
|\zeta| \E[|\tilde Z _{k,j,1}^{y,(\varepsilon)}|^{|l|+1}]^{|l|/(|l|+1)}\E[|\tilde Z _{k,j,1}^{y}-\tilde Z _{k,j,1}^{y,(\varepsilon)}|^{|l|+1}]^{1/(|l|+1)}\Big\}.
\label{DECOUP_HOLDER}
\end{eqnarray}
The point is now to prove, since we have assumed $m\le M-d-5 \iff m+1\le M-d-4   $, that there exists $c$ s.t. for all $r\le m+1 $,
\begin{equation}
\label{CTR_MOM}
\E[|\tilde Z _{k,j,1}^{y}-\tilde Z _{k,j,1}^{y,(\varepsilon)}|^{r}]^{1/r}\le c\Delta_{\varepsilon,\sigma,\gamma},\  \E[|\tilde Z _{k,j,1}^{y}|^r]^{1/r}+\E[|\tilde Z _{k,j,1}^{y,(\varepsilon)}|^{r}]^{1/r}\le c.
\end{equation}
Let us establish the point for the difference, the other bounds can be derived similarly. Define for all $i\in \leftB k,j\rightB,\ \tilde M_{i}:=\sqrt h \sum_{r=k}^{i-1} (\sigma-\sigma_{\varepsilon})(t_r,y)\xi_{r+1} $. The process $(\tilde M_i)_{i\in \leftB k,j\rightB} $ is a square integrable martingale (in discrete time, w.r.t. $\F_{i}:=\Sigma(\xi_r, r\le i )$, $\Sigma $-field generated by the innovation up to the current time). Its quadratic variation writes $[\tilde M]_i=h\sum_{r=k}^{i-1} |(\sigma-\sigma_{\varepsilon})(t_r,y)|^2 |\xi_{r+1}|^2 $ and the Burkholder-Davies-Gundy inequalities, see e.g. Shiryaev \cite{shir:96}, give for all $r\le M-d-4  $:
\begin{equation}
\label{PREAL_CTR_PETIT}
\E[\sup_{i\in \leftB k,j\rightB}|\tilde M_i|^r]\le c_r\E[[\tilde M]_{j}^{r/2}]= c_rh^{r/2}\E[(\sum_{i=k}^{j-1}|(\sigma-\sigma_{\varepsilon})(t_i,y)|^2|\xi_{i+1}|^2)^{r/2}].
\end{equation}
If $r=2 $ one readily gets:
\begin{eqnarray*}
\E[|\tilde Z _{k,j,1}^{y}-\tilde Z _{k,j,1}^{y,(\varepsilon)}|^2]\le \frac{c_2}{(t_j-t_k)} \E[\sup_{i\in \leftB k,j\rightB}|\tilde M_i|^2]\le \frac{c_2h}{(t_j-t_k)}\Delta_{\varepsilon,\sigma,\gamma}^2 \sum_{i=k}^{j-1}\E[|\xi_{i+1}|^2]\le \bar c_2 \Delta_{\varepsilon,\sigma,\gamma}^2.
\end{eqnarray*}
Let us thus assume $r>2$ and derive from \eqref{PREAL_CTR_PETIT}
\begin{eqnarray*}
\E[|\tilde Z _{k,j,1}^{y}-\tilde Z _{k,j,1}^{y,(\varepsilon)}|^r]\le \frac{c_r}{(t_j-t_k)^{r/2}} \E[\sup_{i\in \leftB k,j\rightB}|\tilde M_i|^r]\\
\le \frac{c_rh^{r/2}}{(t_j-t_k)^{r/2}}\E[(\sum_{i=k}^{j-1}|(\sigma-\sigma_{\varepsilon})(t_i,y)|^r |\xi_{i+1}|^r)(\sum_{i=k}^{j-1}1)^{r/2(1-2/r)}], 
\end{eqnarray*}
applying H\"older's inequality for the counting measure with $p=r/2, q=r/(r-2)$ for the last inequality. This finally gives:
\begin{eqnarray*}
\E[|\tilde Z _{k,j,1}^{y}-\tilde Z _{k,j,1}^{y,(\varepsilon)}|^r]\le \frac{c_rh^{r/2}}{(t_j-t_k)^{r/2}}(j-k)^{r/2-1}\Delta_{\varepsilon,\sigma,\gamma}^r \sum_{i=k}^{j-1}\E[|\xi_{i+1}|^{r}]\le \bar c_r \Delta_{\varepsilon,\sigma,\gamma}^r.
\end{eqnarray*}
Since we have assumed $r\le m+1\le M-d-4$, this gives the first control in \eqref{CTR_MOM}. The other one readily follows replacing $\sigma-\sigma_{\varepsilon} $ by $\sigma$ or $\sigma_{\varepsilon} $.

From equations \eqref{DECOUP_HOLDER}, \eqref{CTR_MOM} and similar controls for  ${\mathcal E}_{2,\bar \beta-l}(\zeta) $ we finally derive:
\begin{align*}
|{\mathcal E}_{1,l}(\zeta)|+|{\mathcal E}_{2,\bar \beta -l}(\zeta)|\le C_1 \Delta_{\varepsilon,\sigma,\gamma} (1+|\zeta|) .
\end{align*}
As a result we have from \eqref{FOURIER_T} and \eqref{DECOMP_DIFF_DENS_CDM}:
\begin{align*}
| D_\zeta^{ \beta}( \zeta^{ \alpha  } (\hat q_{j-k}(\zeta)-D_\zeta^{ \beta} \hat q_{j-k,\varepsilon}(\zeta)) )|\\
 \le C \Delta_{\varepsilon,\sigma,\gamma} \Bigg\{ \sum_{
{\tiny \begin{array}{c}
 \bar \beta, |\bar \beta|\le |\beta|\\
 \bar \alpha=\alpha-(\beta-\bar \beta).
 \end{array}}
 }
 \sum_{l, |l|\le |\bar \beta|}^{ } (|\Psi_1^{ \bar \alpha ,
 \bar \beta-l }(\zeta)|+|\Psi_2^{\bar \alpha
 ,
 l }(\zeta)|)(1+|\zeta|)\Bigg\}.
\end{align*}

We finally derive from \eqref{FOURIER_T} and \eqref{INT_F_CAR} (which thanks to the smoothness assumption on $Q_M$ in \A{I${}_{P,M}$} holds as well for a multi-index $\bar \alpha, |\bar \alpha|=5$):
\begin{equation}
\label{CTR_NO_DRIFT}
|\Theta_{j-k,\varepsilon}(z)|\le \frac{1}{(2\pi)^d} \int_{\R^d}|\hat \Theta_{j-k,\varepsilon}(\zeta)|d\zeta \le c\Delta_{\varepsilon,\sigma,\gamma}.
\end{equation}
From \eqref{REL} this concludes the proof for $j>k+1$. 
If $j={k+1} $ the previous arguments can be simplified and lead to the same results. 
\end{proof}


\subsubsection{Comparison of the parametrix kernels}
This step is crucial and actually the key to the result for the Markov chains. We focus for simplicity on the case $q=+\infty$, for which pointwise controls for the differences between the drift coefficients are available, and which already emphasizes all the difficulties. The case $q\in (d,+\infty)$ for the drifts could be handled  as in Lemma \ref{ONE_STEP_C}, using similar H\"older inequalities.

We actually have the following Lemma.
\begin{LEMME}[Control of the One-Step Convolution for the Chain.]\label{LEMMA_DIFF_KER_MC}
There exists $c_1,c $ s.t. 
for all $0\le t_k<t_j\le T, (z,y)\in (\R^d)^2 $:
$$|(H^h-H_{\varepsilon}^h)(t_k,t_j,z,y)|\le  \frac{\Delta_{\varepsilon,\gamma,\infty}}{(t_j-t_k)^{1-\gamma/2}}\Phi_{c,c_1}(t_j-t_k,z-y),$$
with
\begin{trivlist}
\item[-] $\Phi_{c,c_1}(t_j-t_k,z-y)=\psi_{c,c_1}(t_j-t_k,z-y) $ under \A{I${}_G $}. 
\item[-] $\Phi_{c,c_1}(t_j-t_k,z-y)=\psi_{c,c_1}(t_j-t_k,z-y) \left(1+\frac{|z-y|}{(t_j-t_k)^{1/2}}\right)^{\gamma}$, under  \A{I${}_{P,M} $},
\end{trivlist}
where $\psi_{c,c_1} $ is defined according to the assumptions on the innovations in Lemma \ref{LEMME_COMP_MC}.
\end{LEMME}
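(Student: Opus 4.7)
The strategy is to adapt the continuous-time decomposition of Lemma \ref{ONE_STEP_C} to the Markov chain setting, the main novelty being that $L_{t_k}^h$ and $\tilde L_{t_k}^{h,y}$ are finite-difference (not differential) operators, which forces a Taylor expansion with controlled remainder rather than an exact identity. First, expanding $\tilde p^h(t_{k+1},t_j,z+\cdot,y)$ around $z$ to order three and using $\mathbb{E}[\xi]=0$, $\mathbb{E}[\xi\xi^*]=I_d$, together with the bounded moments of $\xi$ provided by \A{I${}_G$} or \A{I${}_{P,M}$}, one writes
\begin{equation*}
H^h(t_k,t_j,z,y)=\langle b(t_k,z),D_z\tilde p^h\rangle+\tfrac12\Tr\bigl[(a(t_k,z)-a(t_k,y))D_z^2\tilde p^h\bigr]+(R^h-\tilde R^h),
\end{equation*}
and similarly for $H_{\varepsilon}^h$, where the remainders $R^h,\tilde R^h$ involve $h^{1/2}$ times derivatives of $\tilde p^h$ of order $\ge 3$.

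Subtracting and inserting/removing mixed terms gives the four main contributions (exactly mimicking \eqref{THE_DECOMP_DIFF_NOY}):
\begin{enumerate}
\item[(i)] $\langle (b-b_{\varepsilon})(t_k,z), D_z\tilde p^h\rangle$, bounded by $\Delta_{\varepsilon,b,\infty}(t_j-t_{k+1})^{-1/2}\psi_{c,c_1}$ via Lemma \ref{LEMME_COMP_MC};
\item[(ii)] $\langle b_{\varepsilon}(t_k,z),D_z(\tilde p^h-\tilde p_\varepsilon^h)\rangle$, bounded by $K_1\Delta_{\varepsilon,\sigma,\gamma}(t_j-t_{k+1})^{-1/2}\psi_{c,c_1}$;
\item[(iii)] $\frac12\Tr\bigl[(a(t_k,z)-a(t_k,y))(D_z^2\tilde p^h-D_z^2\tilde p_\varepsilon^h)\bigr]$, bounded using $|a(t_k,z)-a(t_k,y)|\le \kappa|z-y|^\gamma$ and the difference estimate for $D^2$;
\item[(iv)] $\frac12\Tr\bigl[\{(a-a_{\varepsilon})(t_k,z)-(a-a_{\varepsilon})(t_k,y)\}D_z^2\tilde p_\varepsilon^h\bigr]$, bounded using the remark after (A3) giving the factor $\Delta_{\varepsilon,\sigma,\gamma}|z-y|^\gamma$ and the bound on $D^2\tilde p_\varepsilon^h$.
\end{enumerate}
For $j\ge k+2$ one has $(t_j-t_{k+1})\ge (t_j-t_k)/2$, so the $(t_j-t_{k+1})^{-1}$ singularity of $D^2\tilde p^h$ is comparable to $(t_j-t_k)^{-1}$; the boundary case $j=k+1$ is handled directly since $L_{t_k}^h\tilde p^h(t_{k+1},t_{k+1},\cdot,y)=h^{-1}\mathbb{E}[\delta_y(\cdot)-\cdots]$ reduces to a one-step computation. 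To absorb the $|z-y|^\gamma$ factor appearing in (iii)--(iv) into the density, I use under \A{I${}_G$} the standard Gaussian estimate $|z-y|^\gamma p_c(t,z-y)\le C t^{\gamma/2}p_{c'}(t,z-y)$ (with $c'<c$); under \A{I${}_{P,M}$} the polynomial $Q_{M-d-5}$ cannot be re-scaled as freely, and the factor must remain explicit as $(t_j-t_k)^{\gamma/2}(1+|z-y|/(t_j-t_k)^{1/2})^\gamma$, which is precisely the origin of the extra factor in the definition of $\Phi_{c,c_1}$.

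Finally, the Taylor remainders $(R^h-\tilde R^h)-(R_{\varepsilon}^h-\tilde R_{\varepsilon}^h)$ are treated by the same splitting, writing each summand as $D^\alpha\tilde p^h-D^\alpha\tilde p_\varepsilon^h$ multiplied by a (bounded) coefficient plus $D^\alpha\tilde p_\varepsilon^h$ multiplied by a coefficient difference controlled by $\Delta_{\varepsilon,\gamma,\infty}$. Lemma \ref{LEMME_COMP_MC} applied with $|\alpha|\le 4$ (which is precisely the reason for taking four derivatives in its statement), combined with the $h^{k/2}$ gain coming from each extra order $k\ge 3$ of the Taylor expansion and the comparison $h\le t_j-t_k$, shows these remainder terms are dominated by $(t_j-t_k)^{-1+\gamma/2}\Delta_{\varepsilon,\gamma,\infty}\Phi_{c,c_1}$. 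The main obstacle is precisely this last step: carefully matching the negative powers of $(t_j-t_{k+1})$ produced by high-order spatial derivatives with positive powers of $h\le (t_j-t_k)$ so as not to degrade the announced integrable singularity $(t_j-t_k)^{-1+\gamma/2}$, while simultaneously propagating the dichotomy in the concentration function $\Phi_{c,c_1}$ dictated by the innovation assumptions.
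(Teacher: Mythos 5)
Your proposal reorganizes the proof as a third-order Taylor expansion that isolates a ``continuous-time--like'' main part $\langle b,D_z\tilde p^h\rangle+\tfrac12\Tr[(a(t_k,z)-a(t_k,y))D_z^2\tilde p^h]$ plus remainders, then splits the main part into (i)--(iv) exactly as in \eqref{THE_DECOMP_DIFF_NOY}. The paper instead keeps a second-order Taylor expansion \emph{with integral rest} (the functions $\varphi_\lambda^h$ in \eqref{DEF_PHI}--\eqref{DECOUP_DELTA_H}) and never extracts the $\tfrac12\Tr[(a(z)-a(y))D^2]$ term; it then does a rather nested decomposition $\Delta_2H^{h,\varepsilon}=\Delta_{21}H^{h,\varepsilon}+\Delta_{22}H^{h,\varepsilon}$ by freezing one argument of $\varphi_\lambda^h$ at a time. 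Your route is conceptually cleaner for the main terms, but it shifts the weight of the proof onto the remainder, which is precisely where the proposal thins out.

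The genuine gap is in the treatment of the remainders. Whether one uses the second-order integral rest or a third-order expansion, the ``rest'' terms involve $D_z^\alpha\tilde p^h(t_{k+1},t_j,\,z+\lambda T^h(t_k,z,w),\,y)$, i.e.\ derivatives of the frozen density evaluated at a \emph{shifted} spatial point, not at $z$. Lemma \ref{LEMME_COMP_MC} only gives pointwise bounds at $z$. Returning to the unshifted point is the actual crux of the paper's proof: under \A{I${}_G$} this is a convexity/Young-type inequality for the Gaussian (equation \eqref{C_INTER_G}), and under \A{I${}_{P,M}$} it simply fails pointwise; the paper has to split the $w$-integral into $w\in\mathcal D$ and $w\notin\mathcal D$ and, on the off-diagonal region, spend part of the decay of $f_\xi$ itself (equation \eqref{LOSS_CONC}) to produce the polynomial off-diagonal bound for $q_m$. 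Your proposal writes the remainders as ``$D^\alpha\tilde p^h-D^\alpha\tilde p_\varepsilon^h$ times a bounded coefficient plus \dots'', implicitly treating these derivatives as being evaluated at $z$, and your discussion of the $\Phi_{c,c_1}$ dichotomy only addresses absorbing the $|z-y|^\gamma$ H\"older factor into the concentration function, not the shifted-argument problem. You flag a ``main obstacle'' in matching powers of $h$ and $(t_j-t_{k+1})$, but the obstacle that the paper actually works to overcome is the one about the shifted argument and the weak (polynomial) tails of $f_\xi$, and this is not resolved in your proof. Without the $w\in\mathcal D$/$w\notin\mathcal D$ dichotomy (or an equivalent device), the claimed bound on the remainders under \A{I${}_{P,M}$} does not follow.

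Two smaller inaccuracies, both cosmetic: the statement that the remainders involve ``$h^{1/2}$ times derivatives of $\tilde p^h$ of order $\ge 3$'' misses the $O(h)\,b^*D_z^2\tilde p^h\,b$ contribution at order two; and the third-order remainder is only of size $h^{1/2}$ if $\E[\xi^{\otimes 3}]\neq 0$, which is allowed under \A{I${}_{P,M}$} but not present under \A{I${}_G$}. Neither affects the final exponent, but worth stating precisely.
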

\begin{proof} 
The case $k=j+1 $ involves directly differences of densities and could be treated more directly than the case $k>j+1$.
We thus focus on the latter. Introduce for $k\in \leftB 0,N\rightB, (x,w)\in (\R^d)^2 $ the one step transitions:
\begin{equation}
\label{DEF_TRANS}
\begin{split}
T^h(t_k,x,w)&:=b(t_k,x)h+h^{1/2}\sigma(t_k,x)w,\ T_{\varepsilon}^h(t_k,x,w):=b_{\varepsilon}(t_k,x)h+h^{1/2}\sigma_{\varepsilon}(t_k,x)w,\\
T_0^h(t_k,x,w)&:=h^{1/2}\sigma(t_k,x)w,\ T_{\varepsilon,0}^h(t_k,x,w):=h^{1/2}\sigma_{\varepsilon}(t_k,x)w.
\end{split}
\end{equation}
From the definition of $H^h,H_{\varepsilon}^h $, recalling that $f_\xi $ stands for the density of the innovation, the difference of the kernels writes:
\begin{eqnarray}
(H^h-H_{\varepsilon}^h)(t_k,t_j,z,y)
&=&h^{-1}\int_{\R^d} dw f_\xi(w)\Bigg[\Big\{\tilde p^h(t_{k+1},t_j,z+T^h(t_k,z,w),y)-\tilde p^h(t_{k+1},t_j,z+T_0^h(t_k,y,w),y) \Big\}\notag\\
&&-\Big\{\tilde p_{\varepsilon}^h(t_{k+1},t_j,z+T_{\varepsilon}^h(t_k,z,w),y)-\tilde p_{\varepsilon}^h(t_{k+1},t_j,z+T_{0,\varepsilon}^h(t_k,y,w),y) \Big\}\Bigg].\label{DIFF_KER_H}
\end{eqnarray}
Let us now perform a Taylor expansion at order 2 with integral rest. 
To this end, let us first introduce for $\lambda\in [0,1] $ the mappings:
\begin{eqnarray}
\begin{array}{cccl}
\varphi_\lambda^h: &\R^d\times \R^d &\longrightarrow &\R\\
                            &(T_1,T_2)&\longmapsto &\Tr\Big(D_z^2 \tilde p^h(t_{k+1},t_j,z+\lambda T_1,y)[T_2T_2^*] \Big),
\end{array}\notag \\
\begin{array}{cccl}
\varphi_{\lambda,\varepsilon}^h: &\R^d\times \R^d &\longrightarrow &\R\\
                            &(T_1,T_2)&\longmapsto &\Tr\Big(D_z^2 \tilde p_{\varepsilon}^h(t_{k+1},t_j,z+\lambda T_1,y)[T_2T_2^*] \Big),
\end{array}
\label{DEF_PHI}
\end{eqnarray}
where $T_2$ is viewed as a column vector and $T_2^*$ denotes its transpose.
Recalling as well that  $\xi $ is centered we get:
\begin{eqnarray}
\Delta H^{h,\varepsilon}(t_k,t_j,z,y):=(H^h-H_{\varepsilon}^h)(t_k,t_j,z,y)\notag\\
=\Bigg[\Big\langle D_z \tilde p^h(t_{k+1},t_j,z,y), b(t_k,z) \Big\rangle
-\Big\langle D_z\tilde p_{\varepsilon}^h(t_{k+1},t_j,z,y), b_{\varepsilon}(t_k,z) \Big\rangle\Bigg]\notag\\
+h^{-1}\int_{\R^d} dw f_\xi(w)\int_0^1 d\lambda (1-\lambda)\notag\\
\times \Bigg[ \Big\{\varphi_\lambda^h(T^h(t_k,z,w),T^h(t_k,z,w))-\varphi_\lambda^h(T_0^h(t_k,y,w),T_0^h(t_k,y,w))\Big\}\notag\\
-\Big\{\varphi_{\lambda,\varepsilon}^h(T_{\varepsilon}^h(t_k,z,w),T_{\varepsilon}^h(t_k,z,w))-\varphi_{\lambda,\varepsilon}^h(T_{0,\varepsilon}^h(t_k,y,w),T_{0,\varepsilon}^h(t_k,y,w))\Big\}
\Bigg]\notag\\
=:(\Delta_1 H^{h,\varepsilon}+\Delta_2 H^{h,\varepsilon})(t_k,t_j,z,y), \label{DECOUP_DELTA_H}
\end{eqnarray}
where for $i\in \{1,2\} $, $\Delta_i H^{h,\varepsilon} $ is associated with the terms of order $i$.
The idea is now to make 
 $\Delta_{\varepsilon,\gamma,\infty} $ appear explicitly. 
The term $\Delta_1 H^{h,\varepsilon}$ is the easiest to handle. We can indeed readily write:
\begin{eqnarray*}
\Delta_1 H^{h,\varepsilon}(t_k,t_j,z,y)\\
=\Bigg[\Big\langle D_z \tilde p^h(t_{k+1},t_j,z,y), [b(t_k,z) -b_{\varepsilon}(t_k,z) ]\Big\rangle
-\Big\langle (D_z\tilde p_{\varepsilon}^h-D_z\tilde p^h)(t_{k+1},t_j,z,y), b_{\varepsilon}(t_k,z) \Big\rangle\Bigg].
\end{eqnarray*} 

From Assumption \A{A3}, equation \eqref{DEF_D_EPS} and Lemma \ref{LEMME_COMP_MC} we derive for $q=+\infty$:
\begin{equation}
\label{CTR_DELTA_H1}
|\Delta_1 H^{h,\varepsilon}(t_k,t_j,z,y)|\le  
\frac{C\Delta_{\varepsilon,\gamma,\infty} 
}{(t_j-t_k)^{1/2}}\psi_{c,c_1}(t_j-t_k,y-z).
\end{equation}
The term $\Delta_2 H^{h,\varepsilon} $ is trickier to handle. Define to this end:
\begin{eqnarray*}
\Delta \varphi_\lambda^{h,\varepsilon}(t_k,z,y,w):=
\Big\{\varphi_\lambda^h(T^h(t_k,z,w),T^h(t_k,z,w))-\varphi_\lambda^h(T_0^h(t_k,y,w),T_0^h(t_k,y,w))\Big\}\notag\\
-\Big\{\varphi_{\lambda,\varepsilon}^h(T_{\varepsilon}^h(t_k,z,w),T_{\varepsilon}^h(t_k,z,w))-\varphi_{\lambda,\varepsilon}^h(T_{0,\varepsilon}^h(t_k,y,w),T_{0,\varepsilon}^h(t_k,y,w))\Big\}.
\end{eqnarray*}
Let us then decompose:
\begin{eqnarray}
\Delta \varphi_\lambda^{h,\varepsilon}(t_k,z,y,w)
:=\Bigg[\Big\{\varphi_\lambda^h(T^h(t_k,z,w),T^h(t_k,z,w))-\varphi_\lambda^h(T^h(t_k,z,w),T_0^h(t_k,y,w))\Big\}\notag\\
-\Big\{\varphi_{\lambda,\varepsilon}^h(T_{\varepsilon}^h(t_k,z,w),T_{\varepsilon}^h(t_k,z,w))-\varphi_{\lambda,\varepsilon}^h(T_{\varepsilon}^h(t_k,z,w),T_{0,\varepsilon}^h(t_k,y,w))\Big\}\Bigg]\notag\\
+\Bigg[\Big\{\varphi_\lambda^h(T^h(t_k,z,w),T_0^h(t_k,y,w))-\varphi_\lambda^h(T_0^h(t_k,y,w),T_0^h(t_k,y,w))\Big\}\notag\\
-\Big\{ \varphi_{\lambda,\varepsilon}^h(T_{\varepsilon}^h(t_k,y,w),T_{0,\varepsilon}^h(t_k,y,w))- \varphi_{\lambda,\varepsilon}^h(T_{0,\varepsilon}^h(t_k,z,w),T_{0,\varepsilon}^h(t_k,y,w))\Big\}
\Bigg]\notag\\
=:(\Delta_1 \varphi_\lambda^{h,\varepsilon}+\Delta_2 \varphi_\lambda^{h,\varepsilon})(t_k,z,y,w),\label{DEF_D_PHI_I}
\end{eqnarray}
and write from \eqref{DECOUP_DELTA_H}:
\begin{eqnarray}
\Delta_2 H^{h,\varepsilon}(t_k,t_j,z,y)=h^{-1}\int_{\R^d}dw f_\xi(w)\int_0^1 d\lambda(1-\lambda)(\Delta_1 \varphi_\lambda^{h,\varepsilon}+\Delta_2 \varphi_\lambda^{h,\varepsilon})(t_k,z,y,w)\notag\\
=:(\Delta_{21} H^{h,\varepsilon}+\Delta_{22} H^{h,\varepsilon})(t_k,t_j,z,y), \label{INT_DELTA_2_H}
\end{eqnarray}
for the associated contributions in $\Delta_2 H^{h,\varepsilon} $.
Again, we have to consider these two terms separately. 

\textbf{Term $\Delta_{21} H^{h,\varepsilon} $.} We first write from \eqref{DEF_D_PHI_I}:
\begin{eqnarray}
\Delta_1 \varphi_\lambda^{h,\varepsilon}(t_k,z,y,w)\notag\\
=\Bigg[\Big\{\varphi_\lambda^h(T^h(t_k,z,w),T^h(t_k,z,w))-\varphi_\lambda^h(T^h(t_k,z,w),T_0^h(t_k,y,w))\Big\}-\notag\\
\Big\{\varphi_\lambda^h(T^h(t_k,z,w),T_{\varepsilon}^h(t_k,z,w))-\varphi_\lambda^h(T^h(t_k,z,w),T_{0,\varepsilon}^h(t_k,y,w))\Big\}
\Bigg]\notag\\
+\Bigg[\Big\{\varphi_\lambda^h(T^h(t_k,z,w),T_{\varepsilon}^h(t_k,z,w))-\varphi_\lambda^h(T^h(t_k,z,w),T_{0,\varepsilon}^h(t_k,y,w))\Big\}\notag\\
-\Big\{\varphi_\lambda^h(T_{\varepsilon}^h(t_k,z,w),T_{\varepsilon}^h(t_k,z,w))-\varphi_\lambda^h(T_{\varepsilon}^h(t_k,z,w),T_{0,\varepsilon}^h(t_k,y,w))\Big\}\Bigg]\notag\\
-\Bigg[\Big\{\varphi_{\lambda,\varepsilon}^h(T_{\varepsilon}^h(t_k,z,w),T_{\varepsilon}^h(t_k,z,w))-\varphi_{\lambda,\varepsilon}^h(T_{\varepsilon}^h(t_k,z,w),T_{0,\varepsilon}^h(t_k,y,w))\Big\}\notag\\
-\Big\{\varphi_\lambda^h(T_{\varepsilon}^h(t_k,z,w),T_{\varepsilon}^h(t_k,z,w))-\varphi_\lambda^h(T_{\varepsilon}^h(t_k,z,w),T_{0,\varepsilon}^h(t_k,y,w))\Big\}\Bigg]\notag\\
=:\sum_{i=1}^3 \Delta_{1i} \varphi_\lambda^{h,\varepsilon}(t_k,z,y,w).
\label{SUM_D_1}
\end{eqnarray}
We now state some useful controls for the analysis. Namely, setting:
\begin{equation*}
\begin{split}
D(t_k,z,y,w)&:=T^h(t_k,z,w)T^h(t_k,z,w)^*-T_0^h(t_k,y,w)T_0^h(t_k,y,w)^*,\\
D_{\varepsilon}(t_k,z,y,w)&:=T_{\varepsilon}^h(t_k,z,w)T_{\varepsilon}^h(t_k,z,w)^*-T_{0,\varepsilon}^h(t_k,y,w)T_{0,\varepsilon}^h(t_k,y,w)^*,
\end{split}
\end{equation*}
we have from \A{A3} and equation \eqref{DEF_D_EPS} for $q=+\infty$ :
\begin{eqnarray}
(|D|+|D_{\varepsilon}|)(t_k,z,y,w)&\le& \bar c(h^2+h^{3/2}|w|+h(1\wedge |z-y|)^\gamma|w|^2),\notag\\
|D-D_{\varepsilon}|(t_k,z,y,w) &\le&  \bar c\Delta_{\varepsilon,\gamma,\infty}(h^2+h^{3/2}|w|+h(1\wedge |z-y|)^\gamma|w|^2).\label{CTR_DIFF}
\end{eqnarray}

From the definition of $\varphi_\lambda^h$ in \eqref{DEF_PHI}, equation \eqref{SUM_D_1}, the control \eqref{CTR_DIFF} and Lemma \ref{LEMME_COMP_MC}, we get:
\begin{equation}
\label{C_INTER_1}
\begin{split}
|\Delta_{11} \varphi_\lambda^{h,\varepsilon}|(t_k,z,y,w) \\
\le \bar c \Delta_{\varepsilon,\gamma,\infty} \frac{\psi_{c,c_1}(t_j-t_k,y-(z+\lambda T^h(t_k,z,w)))}{(t_j-t_k)}(h^2+h^{3/2}|w|+h(1\wedge|z-y|)^\gamma|w|^2).
\end{split}
\end{equation}

We would similarly get from Lemma \ref{LEMME_COMP_MC} and \eqref{CTR_DIFF}:
\begin{eqnarray}
|\Delta_{13} \varphi_\lambda^{h,\varepsilon}|(t_k,z,y,w)\notag\\
 \le \bar c \Delta_{\varepsilon,\gamma,\infty} \frac{\psi_{c,c_1}(t_j-t_k,y-(z+\lambda T_{\varepsilon}^h(t_k,z,w)))}{(t_j-t_k)}(h^2+h^{3/2}|w|+h(1\wedge|z-y|)^\gamma|w|^2),\notag\\
|\Delta_{12} \varphi_\lambda^{h,\varepsilon}|(t_k,z,y,w)\notag\\
\le   \frac{\psi_{c,c_1}(t_j-t_k,y-(z+\theta \lambda T^h(t_k,z,w)+(1-\theta)\lambda T_{\varepsilon}^h(t_k,z,w) ))}{(t_j-t_k)^{3/2}}\notag\\
\times |(T^h-T_{\varepsilon}^h)(t_k,z,w)||D_{\varepsilon}|(t_k,z,y,w)\notag\\
\le \bar c \Delta_{\varepsilon,\gamma,\infty} \frac{\psi_{c,c_1}(t_j-t_k,y-(z+\theta \lambda T^h(t_k,z,w)+(1-\theta)\lambda T_{\varepsilon}^h(t_k,z,w) ))}{(t_j-t_k)^{3/2}}\notag\\
\times  
(h^2+h^{3/2}|w|+h(1\wedge|z-y|)^\gamma|w|^2)(h+h^{1/2}|w|),
\label{C_INTER_2}
\end{eqnarray}
for some $\theta\in (0,1)$, using as well \eqref{DEF_TRANS} and \eqref{DEF_D_EPS} for the last inequality. The point is now to get rid of the transitions appearing in the function $\psi_{c,c_1} $. We separate here the two assumptions at hand.
\begin{trivlist}
\item[-] Under \A{I${}_G$}, it suffices to remark that by the convexity inequality $|z-y-\Theta|^2\ge \frac{1}{2}|z-y|^2-|\Theta|^2 $, for all $\Theta \in \R^d$:
\begin{eqnarray*}
\psi_{c,c_1}(t_j-t_k,y-z-\Theta)
\le c_1 
\frac{c^{d/2}}{(2\pi(t_j-t_k))^{d/2}}\exp\left(-\frac c4 \frac{|z-y|^2}{t_j-t_k}\right)\exp\left(\frac c2\frac{|\Theta|^2}{t_j-t_k}\right).
\end{eqnarray*}
Now, if $\Theta $ is one of the above transitions or linear combination of transitions, we get from \eqref{DEF_TRANS}:
\begin{eqnarray}
\label{C_INTER_G} \psi_{c,c_1}(t_j-t_k,y-z-\Theta)
\le c_1
\frac{(c/2)^{d/2}}{(2\pi(t_j-t_k))^{d/2}} \exp\left(-\frac c4 \frac{|z-y|^2}{t_j-t_k}\right)\exp(\frac c2 K_2^2 |w|^2),
\end{eqnarray}
up to a modification of $c_1$ observing that $h/(t_j-t_k)\le 1 $ and with $K_2 $ as in \A{A${}_1$}. Since $c$ can be chosen 
small enough in the previous controls, up to deteriorating the concentration properties in Lemma \ref{LEMME_COMP_MC}, the last term can be integrated by the standard Gaussian density $f_\xi $ appearing in \eqref{INT_DELTA_2_H}. We thus derive, from \eqref{C_INTER_G}, \eqref{C_INTER_1}, \eqref{C_INTER_2} and the definition in \eqref{SUM_D_1}, up to modifications of $c,c_1$:
\begin{equation*}
\begin{split}
|\Delta_1 \varphi_\lambda^{h,\varepsilon}|(t_k,z,y,w)\le\\
 \Delta_{\varepsilon,\gamma,\infty} h \bar c\psi_{c,c_1}(t_j-t_k,z-y)\exp(c|w|^2)\left\{ 1+ \frac{|w|}{(t_j-t_k)^{1/2}}+\frac{|z-y|^\gamma |w|^2}{t_j-t_k} \right\},
 \end{split}
\end{equation*}
which plugged into \eqref{INT_DELTA_2_H} yields up to modifications of $\bar c, c,c_1$:
\begin{equation}
\label{CTR_D_H21_HN}
|\Delta_{21} H^{h,\varepsilon}(t_k,t_j,z,y)|\le \bar c\frac{\Delta_{\varepsilon,\gamma,\infty} (1\vee T^{(1-\gamma)/2})\psi_{c,c_1}(t_j-t_k,z-y)}{(t_j-t_k)^{1-\gamma/2}}.
\end{equation}
\item[-] Under \A{I${}_{P,M}$}, we only detail the computations for the off diagonal regime $|z-y|\ge c(t_j-t_k)^{1/2} $ which is the most delicate to handle. In this case, we have to discuss according to the position of $w$ w.r.t. $y-z $. With the notations of \A{A2}, introduce
${\mathcal  D}:=\{\bar w\in \R^d: \{\Lambda h\}^{1/2} |\bar w|\le |z-y|/2 \} $.
If $w \in {\mathcal D}$, then, still from \eqref{C_INTER_1}, \eqref{C_INTER_2}, 
\begin{eqnarray*}
(|\Delta_{11} \varphi_\lambda^{h,\varepsilon}|+|\Delta_{13} \varphi_\lambda^{h,\varepsilon}|)(t_k,z,y,w)\\
\le  \bar c \Delta_{\varepsilon,,\gamma,\infty} \frac{\psi_{c,c_1}(t_j-t_k,y-z)}{(t_j-t_k)}(h^2+h^{3/2}|w|+h(1\wedge |z-y|)^\gamma|w|^2),\\
|\Delta_{12} \varphi_\lambda^{h,\varepsilon}|(t_k,z,y,w)\\
\le c \Delta_{\varepsilon,\gamma,\infty} \frac{\psi_{c,c_1}(t_j-t_k,y-z)}{(t_j-t_k)^{3/2}} 
(h^2+h^{3/2}|w|+h(1\wedge |z-y|)^\gamma|w|^2)(h+h^{1/2}|w|).
\end{eqnarray*}
On the other hand, when $w\not \in {\mathcal D}$ we use $f_\xi$ to make the off-diagonal bound of $ \psi_{c,c_1}(t_j-t_k,y-z)$ appear. Namely, we can write:
\begin{eqnarray}
f_\xi(w)&\le& c\frac{1}{(1+|w|)^M}\le c\frac{1}{(1+\frac{|z-y|}{h^{1/2}})^{M-(d+4)}}\frac{1}{(1+|w|)^{d+4}}\notag\\
&\le & c\frac{1}{(1+\frac{|z-y|}{(t_j-t_k)^{1/2}})^{M-(d+4)}}\frac{1}{(1+|w|)^{d+4}},\label{LOSS_CONC}
\end{eqnarray}
where the last splitting is performed in order to integrate the contribution in $|w|^3 $ coming from the upper bound for $|\Delta_{12}\varphi_\lambda^{h,\varepsilon}|$ in \eqref{C_INTER_2}. Plugging the above controls in \eqref{INT_DELTA_2_H} yields:
\begin{equation}
\label{CTR_D_H21_HN_INNO_P}|\Delta H_{21}^{h,\varepsilon}(t_k,t_j,z,y)|\le \frac{\Delta_{\varepsilon,\gamma,\infty} \Phi_{c,c_1}(t_j-t_k,z-y)}{(t_j-t_k)^{1-\gamma/2}}.
\end{equation}
We emphasize that in the case of innovations with polynomial decays, the control on the difference of the kernels again induces a loss of concentration of order $\gamma $ in order to equilibrate the time singularity. 
\end{trivlist}

\textbf{Term $\Delta_{22} H^{h,\varepsilon} $.} This term can be handled with the same arguments as $\Delta_{21} H^{h,\varepsilon} $. For the sake of completeness we anyhow specify how the different contributions appear.
Namely, with the notations of \eqref{DEF_D_PHI_I} and \eqref{INT_DELTA_2_H}:
\begin{eqnarray*}
\Delta_2 \varphi_\lambda^{h,\varepsilon}(t_k,z,y,w)=\\
\int_0^1 d\mu \Big\{\Big\langle D_{T_1}\varphi_{\lambda}^h(T_0^h(t_k,y,w)+\mu (T^h(t_k,z,w)-T_0^h(t_k,y,w)) ,T_0^h(t_k,y,w))
, T^h(t_k,z,w)-T_0^h(t_k,y,w) \Big\rangle\\
-\Big\langle D_{T_1}\varphi_{\lambda,\varepsilon}^h(T_{0,\varepsilon}^h(t_k,y,w)+\mu(T_{\varepsilon}^h(t_k,z,w)-T_{0,\varepsilon}^h(t_k,y,w)),T_{0,\varepsilon}^h(t_k,y,w))
, T_{\varepsilon}^h(t_k,z,w)-T_{0,\varepsilon}^h(t_k,y,w)\Big\rangle \Big\}\\
=\Big\{\int_0^1 d\mu \Big\{\Big\langle D_{T_1}\varphi_{\lambda}^h(T_0^h(t_k,y,w)+\mu (T^h(t_k,z,w)-T_0^h(t_k,y,w)) ,T_0^h(t_k,y,w)),\\
\big[(T^h(t_k,z,w)-T_0^h(t_k,y,w))-(T_{\varepsilon}^h(t_k,z,w)-T_{0,\varepsilon}^h(t_k,y,w))\big]\Big\rangle \Big\}\\
-\Big\{ \int_0^1 d\mu \Big[\Big\langle D_{T_1}\varphi_{\lambda,\varepsilon}^h(T_{0,\varepsilon}^h(t_k,y,w)+\mu(T_{\varepsilon}^h(t_k,z,w)-T_{0,\varepsilon}^h(t_k,y,w)),T_{0,\varepsilon}^h(t_k,y,w))\\
-D_{T_1}\varphi_{\lambda}^h(T_0^h(t_k,y,w)+\mu(T^h(t_k,z,w)-T_0^h(t_k,y,w)),T_0^h(t_k,y,w)) \Big],
 T_{\varepsilon}^h(t_k,z,w)-T_{0,\varepsilon}^h(t_k,y,w)\Big\rangle \Big\}\\
=:(\Delta_{21} \varphi_\lambda^{h,\varepsilon}+\Delta_{22} \varphi_\lambda^{h,\varepsilon})(t_k,z,y,w).
\end{eqnarray*}
In $\Delta_{21} \varphi_\lambda^{h,\varepsilon}$ we have sensitivities of order 3 for the density, giving time singularities in $(t_j-t_k)^{-3/2} $, which are again equilibrated by the the multiplicative factor:
\begin{eqnarray*}
|T_0^h(t_k,y,w)[T_0^h(t_k,y,w)]^*|\\
\times |(T^h(t_k,z,w)-T_0^h(t_k,y,w))-(T_{\varepsilon}^h(t_k,z,w)-T_{0,\varepsilon}^h(t_k,y,w))|\\
\le \bar c(h^2+h^{3/2}|w|+ h|w|^2)\Delta_{\varepsilon,\gamma,\infty}( h+h^{1/2}(1\wedge |z-y|)^\gamma|w|),
\end{eqnarray*}
where the last inequality is obtained similarly to \eqref{CTR_DIFF} using as well \eqref{DEF_D_EPS}. 
The same kind of controls can be established for $\Delta_{22} \varphi_\lambda^{h,\varepsilon}$. Anyhow, the analysis of this term leads to investigate the difference of third order derivatives, which finally yields contributions involving derivatives of order four. This is what induces the final concentration loss under \A{I${}_{P,M}$}, i.e. we need to integrate a term in $|w|^4 $ (see also equation \eqref{LOSS_CONC} in which we performed the splitting of $f_\xi $ on the \textit{off-diagonal} region to integrate a contribution in $|w|^3$).

We can thus claim that
\begin{equation*}
|\Delta H_{22}^{h,\varepsilon}(t_k,t_j,z,y)|\le \frac{\Delta_{\varepsilon,\gamma,\infty} \Phi_{c,c_1}(t_j-t_k,z-y)}{(t_j-t_k)^{1-\gamma/2}}.
\end{equation*}
Plugging the above control and \eqref{CTR_D_H21_HN_INNO_P} (or \eqref{CTR_D_H21_HN} under \A{I${}_G$})
into  \eqref{INT_DELTA_2_H} we derive:
\begin{equation*}
|\Delta H_{2}^{h,\varepsilon}(t_k,t_j,z,y)|\le \frac{\Delta_{\varepsilon,\gamma,\infty} \Phi_{c,c_1}(t_j-t_k,z-y)}{(t_j-t_k)^{1-\gamma/2}},
\end{equation*}
which together with  \eqref{CTR_DELTA_H1} and the decomposition \eqref{DECOUP_DELTA_H} completes the proof.
\end{proof} 

From Lemmas \ref{LEMME_COMP_MC} and \ref{LEMMA_DIFF_KER_MC} the proof of Theorem \ref{MTHM_M} is achieved, under \A{I${}_G$}, following the steps of Lemmas \ref{ONE_STEP_C} and \ref{LEMME_DIFF_ITE}, using the H\"older inequalities for the differences of the drift terms  for $q\in (d,+\infty)$. 

The point is that we want to justify the following inequality under \A{I${}_{P,M}$} and $q=+\infty$:
\begin{eqnarray}
\label{CTR_SER_CDM} 
|(\tilde p^h \otimes_h H^{h,(r)}-\tilde p_{\varepsilon}^h \otimes_h H_{\varepsilon}^{h,(r)})(t_i,t_j,x,y)| \\ 
\le 
(r+1)
\Delta_{\varepsilon,\gamma,\infty} \frac{\{(1\vee T^{(1-\gamma)/2})c_1\}^{r+1}\left[ \Gamma (\frac{\gamma }{2})\right] ^{r}}{\Gamma
(1+r\frac{\gamma }{2})} \frac{c^{d}}{(t_j-t_i)^{d/2}}Q_{M-(d+5+\gamma)}\left(\frac{y-x}{(t_j-t_i)^{1/2}/c}\right) (t_j-t_i)^{\frac{r \gamma}{2}}.\nonumber
\end{eqnarray}
The only delicate point, w.r.t. the analysis performed for diffusions, consists in controlling the convolutions of the densities with polynomial decay. To this end, we can adapt a technique used by Kolokoltsov \cite{kolo:00} to investigate convolutions of ``stable like" densities.
Set $m:=M-(d+5+\gamma) $ and denote for all $0\le i<j\le N,\ x\in \R^d $ by $q_m(t_j-t_i,x):=\frac{c^{d}}{(t_j-t_i)^{d/2}}Q_{M-(d+5+\gamma)}\left(\frac{x}{(t_j-t_i)^{1/2}/c}\right) $ the density with polynomial decay appearing in Lemmas \ref{LEMME_COMP_MC} and \ref{LEMMA_DIFF_KER_MC}.
Let us consider for fixed $i<k<j, (x,y)\in (\R^d)^2 $ the convolution:
\begin{equation}
\label{CONV_POL}
I_{t_k}^1(t_i,t_j,x,y):=\int_{\R^d} dz q_m(t_k-t_i,z-x)q_m(t_j-t_k,y-z).
\end{equation}
\begin{trivlist}
\item[-] If $|x-y|\le c(t_j-t_i)^{1/2} $ (diagonal regime for the parabolic scaling), it is easily seen that one of the two densities in the integral \eqref{CONV_POL} is homogeneous to $q_m(t_j-t_i,y-x) $. Namely, if $(t_k-t_i)\ge (t_j-t_i)/2 $, $q_m(t_k-t_i,z-x)\le \frac{c^{d}c_m}{(t_k-t_i)^{d/2}}\le \frac{2^{d/2}c^dc_m}{(t_j-t_i)^{d/2}} \le \tilde c q_m(t_j-t_i,y-x) $. Thus,
$$I_{t_k}^1(t_i,t_j,x,y)\le \tilde c q_m(t_j-t_i,y-x)\int_{\R^d}dz q_m(t_j-t_k,y-z)=\tilde c q_m(t_j-t_i,y-x).$$
If $(t_k-t_i)< (t_j-t_i)/2 $, the same operation can be performed taking $q_m(t_j-t_k,y-z) $ out of the integral, observing again that in that case $q_m(t_j-t_k,y-z)\le \tilde c q_m(t_j-t_i,y-x) $.
\item[-] If $|x-y|> c(t_j-t_i)^{1/2} $ (off-diagonal regime),  we introduce $A_1:=\{z\in \R^d :|x-z|\ge \frac12 |x-y|\} $, $A_2:=\{z\in \R^d:|z-y|\ge \frac12 |x-y|\} $. Every $z\in \R^d$ belongs at least to one of the $\{A_i\}_{i\in \{1,2\}}$. Let us assume w.l.o.g. that $z\in A_2 $. Then $|z-y|\ge \frac{c}2(t_j-t_i)^{1/2}\ge  \frac{c}2(t_j-t_k)^{1/2}$ so that the density $q_m(t_j-t_k,y-z) $ is itself in the off-diagonal regime. Write:
\begin{eqnarray*}
\int_{A_2}dz q_m(t_k-t_i,z-x)q_m(t_j-t_k,y-z)
\le \int_{A_2}dz q_m(t_k-t_i,z-x) \frac{c_m(t_k-t_i)^{(m-d)/2}}{|z-y|^{m}}\\
\le \frac{c_m2^m(t_j-t_i)^{(m-d)/2}}{|x-y|^m}\int_{A_2}dzq_m(t_k-t_i,z-x)
\le\bar c q_m(t_j-t_i,y-x),
\end{eqnarray*}
recalling that, under \A{I${}_{P,M}$}, $m>d$ for the last but one inequality.
The same operation could be performed on $A_1$.

We have thus established that, there exist $\bar c>1$ s.t. for all $0\le i<k<j,(x,y)\in (\R^{d})^2 $ :
\begin{equation*}
I_{t_k}^1(t_i,t_j,x,y)\le \bar c q_m(t_j-t_i,y-x).
\end{equation*}
From the controls of Lemma \ref{LEMMA_DIFF_KER_MC} and following the strategy of Lemma \ref{LEMME_DIFF_ITE}, we will be led to consider convolutions of the previous type involving $\Gamma $ functions. The above strategy thus yields \eqref{CTR_SER_CDM} by induction.
\end{trivlist}

\section*{Acknowledgements}
The authors would like to thank the two anonymous referees for their careful reading and suggestions that truly improved the previous version of this work. 
\bibliographystyle{alpha}
\bibliography{bibli}

\end{document}